
\documentclass[journal]{IEEEtran}
\ifCLASSINFOpdf
\else
\fi
\hyphenation{op-tical net-works semi-conduc-tor}

\usepackage{graphicx}
\usepackage{amsmath}
\usepackage{amssymb}
\usepackage{amsthm}
\usepackage{epstopdf}

\newcommand{\norm}[1]{\left\lVert#1\right\rVert}

\newcounter{tempEquationCounter}
\newcounter{thisEquationNumber}

\def\doubleunderline#1{\underline{\underline{#1}}}
\DeclareMathOperator{\alim}{as-lim}

\newtheorem{lemma}{Lemma}
\newtheorem{theorem}{Theorem}
\newtheorem{definition}{Definition}
\newtheorem{proposition}{Proposition}


\usepackage{color}

\begin{document}
%
\title{Sensor Switching Control Under Attacks Detectable by Finite Sample Dynamic Watermarking Tests}
%
%
%
\author{Pedro Hespanhol, Matthew Porter, Ram Vasudevan, and Anil Aswani %
\thanks{This work was supported by the UC Berkeley Center for Long-Term Cybersecurity, and by a grant from Ford Motor Company via the Ford-UM Alliance under award N022977.}
\thanks{Pedro Hespanhol and Anil Aswani are with the Department of Industrial Engineering and Operations Research, University of California, Berkeley, CA 94720, USA 
        {\tt\small pedrohespanhol@berkeley.edu, aaswani@berkeley.edu}}%
				\thanks{Matthew Porter and Ram Vasudevan are with the Department of Mechanical Engineering, University of Michigan, Ann Arbor, MI 48109, USA 
        {\tt\small matthepo@umich.edu, ramv@umich.edu}}%
}        
\maketitle

\begin{abstract}

Control system security is enhanced by the ability
to detect malicious attacks on sensor measurements. Dynamic
watermarking can detect such attacks on linear time-invariant
(LTI) systems. However, existing theory focuses on attack detection
and not on the use of watermarking in conjunction with
attack mitigation strategies. In this paper, we study the problem
of switching between two sets of sensors: One set of sensors has
high accuracy but is vulnerable to attack, while the second set of
sensors has low accuracy but cannot be attacked. The problem is
to design a sensor switching strategy based on attack detection
by dynamic watermarking. This requires new theory because
existing results are not adequate to control or bound the behavior
of sensor switching strategies that use finite data. To overcome
this, we develop new finite sample hypothesis tests for dynamic
watermarking in the case of bounded disturbances, using the
modern theory of concentration of measure for random matrices.
Our resulting switching strategy is validated with a simulation
analysis in an autonomous driving setting, which demonstrates
the strong performance of our proposed policy.
\end{abstract}

\begin{IEEEkeywords}
Dynamic watermarking, observer switching control, finite sample tests
\end{IEEEkeywords}

%
\IEEEpeerreviewmaketitle

\section{Introduction}
%
%
%
%

\label{sec1}

\IEEEPARstart{T}{he} secure and resilient control of cyber-physical systems (CPS) requires safe operation in the face of malicious attacks that can occur on either the physical layer (e.g., sensors and actuators) or the cyber layer (e.g., communication and computation capabilities)\cite{shafi2012cyber}. Real-life incidents like the Maroochy-Shire incident \cite{abrams2008malicious}, the Stuxnet worm \cite{langner2011stuxnet}, and others \cite{cardenas2008research} illustrate the importance of concerns about CPS security. One approach to secure control has been to focus on cybersecurity of CPS \cite{parno2006secure,kumar2006managing,wang2013cyber,kim2012cyber}, but this does not fully exploit the physical aspects of CPS. An alternative is attack identification and detection considering the interplay between the cyber and physical parts of CPS \cite{amin2009safe,cardenas2008research,cardenas2008secure,pasqualetti2013attack}. Many of these techniques are static (i.e., do not consider system dynamics)  \cite{gomez2004power} or passive (i.e., do not actively control system to identify malicious nodes and sensors) \cite{bai2015security,fawzi2014secure,fawzi2011secure}. 

In contrast, \emph{dynamic watermarking} is an active defense technique that injects perturbations into the system control in order to detect attacks \cite{weerakkody2014detecting,mo2009secure,mo2014detecting,gallo2018distributed}. More specifically, this method applies a \emph{private excitation} to the system, which is a disturbance only known to the controller.  Then it uses consistency tests to detect attacks by checking for correlation between sensor measurements and the private excitation. The goal is to be able to detect all sensor attacks whose magnitude exceeds some prespecified amount. 

\subsection{Asymptotic Results for Dynamic Watermarking}

Research on dynamic watermarking can be divided into two main areas of contribution: The first is the development of statistical hypothesis testing that tries to detect corrupted measurements by observing correlations between sensor outputs and the dynamic watermark \cite{mo2009secure,mo2010false,mo2014detecting,weerakkody2014detecting,mo2015physical}. This set of techniques apply to general LTI systems, but cannot ensure the zero-average-power property for general attack models. The second line of work \cite{satchidanandan2016dynamic,ko2016theory} considers general attack models and develop tests able to ensure that only attacks which add a zero-average-power signal to the sensor measurements can remain undetected, but constrain their analysis to LTI systems with specific structure on their dynamics.

More recently, The work done in \cite{satchidanandan2017minimal} and \cite{hespanhol2017dynamic} attempts to bridge this gap by providing statistical guarantees for complex types of attacks for general LTI systems.
While both papers address a general MIMO LTI system, the set of assumptions are somewhat different: the former assumes open-loop stability of the LTI system, and the latter restricts the attack form. In particular, in \cite{hespanhol2017dynamic}, the tests provided are able to detect if a general MIMO LTI system is under a fairly general type of attack. In particular, it considers additive attacks that can dampen/amplify the system measurements, can replay the system from a different initial condition, or can do both. This form of attack, while arguably simple, encompasses many of the types of attacks reported in real-life incidents (e.g., replay attacks \cite{langner2011stuxnet}) as well as compensate for external disturbances not accounted by the system model (e.g., wind when represented via internal model principle \cite{hespanhol2017dynamic}).

We proceed to briefly summarize the results of \cite{hespanhol2017dynamic}, as it is the foundation for this current paper. Consider a MIMO LTI system with partial observations
\begin{equation}
\begin{aligned}
 x_{n+1} = Ax_n + Bu_n + w_n \\
 y_n = Cx_n + z_n + v_n
\end{aligned}
\end{equation}
for some measurement noise $z_n$, system disturbance $w_n$, and attack vector $v_n$. Suppose $(A,B)$ is stabilizable, $(A,C)$ is detectable. Typically, dynamic watermarking approaches will add an additive signal to the control input $u_n = Kx_n + e_n$, where $K$ is some feedback matrix and $e_n$ is our watermarking signal that is unknown to the attacker. Now let $k' = \min\{k\geq 0\ |\ C(A+BK)^kB \neq 0\}$. If we define the test vectors
\begin{equation}
\psi^\top_n = \begin{bmatrix} (C\hat{x}_n^{\vphantom{\top}} - y_n^{\vphantom{\top}})^\top & e_{n-k'-1}^\top\end{bmatrix},
\end{equation}
and the following holds \cite{hespanhol2017dynamic}:
\begin{equation}
\label{eqn:acttestjoint}
\textstyle\alim_{N} \frac{1}{N}\sum_{n=0}^{N-1} \psi_n^{\vphantom{\top}}\psi_n^\top = \begin{bmatrix}C\Sigma_\Delta C^\top+\Sigma_Z & 0\\ 0 & \Sigma_E\end{bmatrix}
\end{equation}
for some specific matrices $\Sigma_\Delta,\Sigma_{E},\Sigma_{Z}$, then all attack vectors $v_n$ following a particular model \cite{hespanhol2017dynamic} are constrained in power
\begin{equation}
\label{eqn:zasp}
\textstyle\alim_N \frac{1}{N}\sum_{n=0}^{N-1} v_n^\top v_n^{\vphantom{\top}} = 0.
\end{equation}
Though these tests only provide asymptotic guarantees, that is enough to construct a statistical version of the test, similar to \cite{satchidanandan2016dynamic} where a hypothesis test is constructed by thresholding the negative log-likelihood. It follows that under a Gaussianity assumption for process and sensor noise, the matrix in (\ref{eqn:acttestjoint}) follows a well-behaved Wishart distribution. While that approach allows us to construct hypothesis tests using known distributions, the dependency of subsequent samples make finite sums display more complex behavior. 
Then it is up to the designer of the watermark to specify a threshold that controls the false error rate. In this framework a rejection of the hypothesis test corresponds to detection of an attack, while an acceptance corresponds to the lack of detection of an attack.  This notation emphasizes the fact that achieving a specified false error rate requires changing the threshold.

\subsection{Intelligent Transportation Systems and Observer Switching}

Though the design and analysis of intelligent transportation systems (ITS) has drawn renewed interest \cite{ko2016theory,gonzalez2010perpetual,aswani2011,zhang2012hierarchical,vasudevan2012safe,mohan2016convex,como2016convexity}, there has been less work on secure control of ITRS. One recent work considered the use of dynamic watermarking to detect sensor attacks in a network of autonomous vehicles coordinated by a supervisory controller\cite{ko2016theory}, while \cite{hespanhol2017statistical} considered a platoon of vehicles where attacks happen not only on the sensors but also on the communication channel.

A particular feature of ITS is the possibility of redundancy in sensing. For instance, one can use a highly accurate satellite-based sensor (susceptible to external attack) and an on-board infrared sensor (\emph{not} susceptible to external attack) in order to obtain spatial data. Then, one way of safeguarding a system susceptible to attacks is to switch from the high accuracy sensor to the on-board sensor when an attack is detected \cite{mitra2016secure}. This approach naturally leads to systems with distributed observers with dynamic switching decision rules \cite{bernat2015multi,mitra2018distributed}. In this scenario, it is crucial to design hypothesis tests that are able to detect attacks while having a decision rule that correctly selects which observer is to be used. Because control switching occurs at finite instances in time, the previous asymptotic results of dynamic watermarking cannot be used for this purpose. The reason, which is subtle, is that hypothesis tests based on characterization of asymptotic distributions will not have the correct theoretical properties in order to ensure proper control of the false alarm rate. Consequently, new finite sample hypothesis tests need to be constructed.

The first contribution of this paper it to provide finite-time guarantees on attack detection via dynamic watermarking, which to the best of our knowledge has not been done before. Namely, we provide statistical tests that provide finite-time guarantees on attack detection, instead of relying of asymptotic behavior of sums of random matrices.  
We also relate the magnitude of an attack to our test power, by describing the inherent trade-off between the test capability of triggering true detection, and the magnitude of the attacks that are allowed to remain undetected in the long run. The finite sample analysis of dynamic watermarking requires the use of random matrix concentration inequalities, which are useful in analyzing the matrices involved in the evolution of LTI system dynamics. The second major contribution of this paper is to provide finite sample concentration-based tests, which allow us to detect attacks and allow switching decisions based on such tests to correctly report attack detections infinitely often. Namely, if there is no attack, we develop a finite sample test that falsely reports attacks only a finite number of times. This is a crucial feature because it also implies that in the long-run the switching rule based on such a test is correctly selecting which observer is active infinitely often.

\subsection{Outline}

In Sect. \ref{sec2}, we define our notation for this paper. We also present the random matrix concentration inequalities that we use to perform our finite sample analysis. Next, in Sect. \ref{Sec3} we present our general LTI framework with switching observers. Then, we apply the concentration inequalities to the LTI setting in Sect. \ref{Sec4} in order to obtain appropriate concentration for the matrices involved. In Sect. \ref{Sec5} and Sect. \ref{Sec6}, we present the finite sample consistency tests and a simple threshold that relates the attack magnitude to the power of our test. Next, in Sect. \ref{Sec6} we provide some numerical results demonstrating our approach on an autonomous vehicle application.

\section{Preliminaries}

\label{sec2}

In this section, we define all relevant notation concerning the random matrix analysis done throughout the paper. We also define the key concepts of Stein's Method \cite{stein1972bound,mackey2014matrix} applied to matrices and the relevant matrix concentration inequalities that will be used. This method turns out to be key to our finite sample analysis of dynamic watermarking, as it involves analyzing sums of inter-temporal dependent matrices.

\subsection{Notation}

We use the symbol $\norm{\cdot}$ for the spectral norm of a matrix, which is the largest singular value of a general matrix. The space of $d\times d$ Hermitian real-valued matrices is denoted by $\mathcal{H}{^{d}}$. Moreover, the symbols $\lambda_{\max}(A), \lambda_{\min}(A)$ are respectively the maximum and the minimum eigenvalues of an Hermitian matrix $A \in \mathcal{H}^d$. The symbol $\preceq$ refers to the semidefinite partial order, namely $A \preceq B$ if and only if $B - A$ is positive semi-definite (p.s.d). For a matrix $A$, we let $(A)_{ij}$ denote the $(ij)$-th element of $A$. We let $\text{tr}(\cdot)$ do the denote the trace operator.

We also define a master probability space $(\Omega, \mathcal{F}, P)$ and a filtration $\{\mathcal{F}_{k}\}$ contained in the master sigma algebra:
\begin{equation}
\mathcal{F}_{k} \subset \mathcal{F}_{k+1} \text{ and } \mathcal{F}_{k} \subset \mathcal{F} , \forall k \geq 0.
\end{equation}
Given such filtration we also define the conditional expectation $\mathbb{E}_{k}[ \cdot]$. We also let $\epsilon$ denote a Radamacher random variable, that takes values in $\{-1,1\}$ with equal probability. The random matrix concentration inequalities involved in this work are derived based on the method of exchangeable pairs based on the Stein's Method \cite{stein1972bound}. Let $Z$ and $Z'$ be random vectors taking values in a space $\mathbb{R}^d$. We say that $(Z,Z')$ is an \emph{exchangeable pair} if it has the same distribution as $(Z',Z)$. Next, we define a matrix Stein pair:
\begin{definition}
Let $Z$ and $Z'$ be an exchangeable pair of random vectors taking values in a space $\mathcal{Z}$, and let $\psi : \mathcal{Z} \rightarrow \mathcal{H}^{d}$ be a measurable function. Define the random Hermitian matrices
\begin{equation}
X = \psi (Z) \textit{ and } X' = \psi (Z').
\end{equation}
We say that $(X, X')$ is a matrix Stein pair if there is a constant $\beta \in (0,1]$ for which $\mathbb{E}[X - X' | Z] = \beta X \textit{ a.s.}$.
\end{definition}
Note it follows from the above definition that $\mathbb{E}[X] = 0$. Also, $\beta$ is called the \emph{scale factor} of the pair $(X, X')$. 

Lastly, we present the concept of dilations, which are used to derive our results. A symmetric dilation of a real-valued rectangular matrix $B$ is
\begin{equation}
\mathcal{D}(B) = \begin{bmatrix} 0 & B \\ B^{\top} & 0 \end{bmatrix} 
\end{equation}
Note that $\mathcal{D}(B)$ is always symmetric, and it satisfies the following useful property:
\begin{equation}
\mathcal{D}(B)^{2} = \begin{bmatrix} 0 & BB^{\top} \\ B^{\top}B & 0 \end{bmatrix}
\end{equation}
Moreover, observe that the norm of the symmetric dilation has a useful relationship with the norm of the original matrix $\lambda_{\max}(\mathcal{D}(B)) = \|\mathcal{D}(B)\| = \|B\|$. We will construct bounds for symmetric matrices and then we will extend those bounds to non-symmetric matrices by using dilations.

\subsection{Matrix Concentration Inequalities}

In order for us to develop finite sample tests we require matrix concentration inequalities. The random matrices involved in this paper are not independent in the general case. We first present a version of matrix Hoeffding inequality for conditionally independent sums of random matrices, that is random matrices that become independent after conditioning on another matrix. This theorem, and the following theorems about concentrations, were first introduced by \cite{mackey2014matrix}, as generalizations of the (respective) independent cases.
\begin{proposition}
\cite{mackey2014matrix} Consider a finite sequence $(Y_k)_{(k\geq 1)}$ of random matrices in $\mathcal{H}^d$ that are conditionally independent given an auxiliary random matrix $Z$ and finite sequences $(P_k)_{k\geq 1}$ and $(Q_k)_{k\geq 1}$ of deterministic matrices in $\mathcal{H}^d$. Assume that
\begin{equation}
\mathbb{E}[Y_k | Z] = 0 \textit{, }  Y^{2}_{k}  \preceq P^{2}_{k} \textit{, } \mathbb{E}[Y^2_{k}|(Y_{j})_{j \neq k}] \preceq  Q^{2}_{k}\textit{ a.s.} \forall k,
\end{equation}
then for all $t \geq 0$ we have
\begin{equation}
\mathrm{P} \left( \lambda_{\max} \left( \sum_{k=0} Y_k \right) \geq t \right) \leq d \cdot e^{-t^2 / 2\sigma^2}
\end{equation}
where $\sigma^2 = \frac{1}{2} \| \sum_{k} P^{2}_{k} + Q^{2}_{k}) \|$.
\end{proposition}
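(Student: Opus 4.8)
The plan is to combine the matrix Laplace transform method with the method of exchangeable pairs (Stein's method), following the template of \cite{mackey2014matrix}. Write $S = \sum_k Y_k$ and let $n$ be the number of terms. I would first reduce the tail bound to a bound on the normalized matrix moment generating function $m(\theta) := \mathbb{E}\,\mathrm{tr}\,e^{\theta S}$: for any $\theta > 0$, the spectral mapping theorem and Markov's inequality give $\mathrm{P}(\lambda_{\max}(S) \ge t) = \mathrm{P}\big(\lambda_{\max}(e^{\theta S}) \ge e^{\theta t}\big) \le e^{-\theta t}\,\mathbb{E}\,\lambda_{\max}(e^{\theta S}) \le e^{-\theta t}\,\mathbb{E}\,\mathrm{tr}\,e^{\theta S} = e^{-\theta t} m(\theta)$, and the Gaussian tail with variance proxy $\sigma^2$ will come out after optimizing $\theta$ at the end. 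Note that $\mathbb{E}[S] = 0$ because $\mathbb{E}[Y_k \mid Z] = 0$, that $S$ is bounded (so $m$ is smooth and differentiation under the expectation is valid), and that $m(0) = d$ --- the last fact being the origin of the dimensional prefactor.

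Next I would construct a matrix Stein pair by Glauber-type resampling. Let $K$ be uniform on the index set, independent of everything else, and let $\tilde Y$ be drawn from the conditional law of $Y_K$ given $(Y_j)_{j\ne K}$, independent of $Y_K$ given those; set $S' = S - Y_K + \tilde Y$. Then $(S,S')$ is exchangeable, since conditionally on $(Y_j)_{j\ne K}$ the pair $(Y_K,\tilde Y)$ is i.i.d.\ and $S,S'$ are the corresponding symmetric functions. The conditional-independence-given-$Z$ hypothesis yields the key identity $\mathbb{E}[Y_k \mid (Y_j)_{j\ne k}] = \mathbb{E}\big[\mathbb{E}[Y_k \mid Z,(Y_j)_{j\ne k}] \,\big|\, (Y_j)_{j\ne k}\big] = \mathbb{E}\big[\mathbb{E}[Y_k\mid Z]\,\big|\,(Y_j)_{j\ne k}\big] = 0$, so $\mathbb{E}[S - S' \mid (Y_j)_j] = \tfrac{1}{n}\sum_k Y_k = \tfrac{1}{n}S$, i.e.\ $(S,S')$ is a matrix Stein pair with scale factor $\beta = 1/n$. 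That same vanishing-mean fact annihilates the cross terms in the conditional second moment: conditioning on all the $Y_j$ and on $K = k$, $\mathbb{E}[(Y_k - \tilde Y)^2 \mid (Y_j)_j] = Y_k^2 + \mathbb{E}[Y_k^2 \mid (Y_j)_{j\ne k}]$, so the conditional-variance matrix satisfies
\[
\Delta_S := \tfrac{1}{2\beta}\,\mathbb{E}\big[(S - S')^2 \,\big|\, (Y_j)_j\big] = \tfrac{1}{2}\sum_k\big(Y_k^2 + \mathbb{E}[Y_k^2 \mid (Y_j)_{j\ne k}]\big) \preceq \tfrac{1}{2}\sum_k\big(P_k^2 + Q_k^2\big) =: V,
\]
directly from the two hypotheses $Y_k^2 \preceq P_k^2$ and $\mathbb{E}[Y_k^2 \mid (Y_j)_{j\ne k}] \preceq Q_k^2$; in particular $\|V\| = \sigma^2$.

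Then I would run the standard exchangeable-pairs differential-inequality argument on $m$. Using the Stein identity and exchangeability, $m'(\theta) = \mathbb{E}\,\mathrm{tr}[S\,e^{\theta S}] = \tfrac{1}{2\beta}\,\mathbb{E}\,\mathrm{tr}[(S - S')(e^{\theta S} - e^{\theta S'})]$. Applying the mean value trace inequality of \cite{mackey2014matrix} with $x \mapsto e^{\theta x}$ ($\theta \ge 0$) and then exchangeability once more bounds this by $\tfrac{\theta}{2\beta}\,\mathbb{E}\,\mathrm{tr}[(S - S')^2 e^{\theta S}] = \theta\,\mathbb{E}\,\mathrm{tr}[\Delta_S\, e^{\theta S}]$, where the last equality inserts the conditional expectation defining $\Delta_S$ (legitimate because $e^{\theta S}$ is $\sigma((Y_j)_j)$-measurable). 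Since $\Delta_S \preceq V$ and $e^{\theta S} \succeq 0$, the inequality $\mathrm{tr}[AB] \le \|A\|\,\mathrm{tr}[B]$ for positive semidefinite $A,B$ gives $m'(\theta) \le \theta\,\|V\|\,m(\theta) = \theta\sigma^2 m(\theta)$. Integrating $(\log m)'(\theta) \le \theta\sigma^2$ from $0$ with $m(0) = d$ yields $m(\theta) \le d\,e^{\theta^2\sigma^2/2}$, and substituting into the Laplace bound and choosing $\theta = t/\sigma^2$ gives $\mathrm{P}(\lambda_{\max}(S) \ge t) \le d\,e^{-t^2/2\sigma^2}$, as claimed.

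The main obstacle is getting the Stein pair right: the coordinate must be resampled from its distribution conditional on the \emph{other} coordinates, not conditional on the auxiliary matrix $Z$, and it is precisely the conditional independence given $Z$ that then forces $\mathbb{E}[Y_k \mid (Y_j)_{j\ne k}] = 0$. This single fact both pins the scale factor at the clean value $1/n$ and cancels the cross terms in the conditional second moment, which is what lets the pointwise bound $P_k^2$ and the conditional-variance bound $Q_k^2$ enter separately and additively into $V$. The remaining nontrivial ingredient --- the mean value trace inequality that converts $\mathrm{tr}[(S - S')(e^{\theta S} - e^{\theta S'})]$ into a quadratic form in $S - S'$ tested against $e^{\theta S}$ --- is the genuinely noncommutative step, but it is available off the shelf from \cite{mackey2014matrix} and only needs to be quoted.
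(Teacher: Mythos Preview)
The paper does not actually prove this proposition: it is stated with a citation to \cite{mackey2014matrix} and used as a black box. Your proposal is therefore not being compared against an in-paper argument but against the original source, and in that respect it is faithful --- you have reconstructed the exchangeable-pairs proof of the conditionally-independent matrix Hoeffding bound essentially as it appears in \cite{mackey2014matrix}: Glauber resampling to build the Stein pair, the tower-property computation that extracts $\mathbb{E}[Y_k\mid (Y_j)_{j\neq k}]=0$ from conditional independence given $Z$, the resulting clean conditional variance $\Delta_S\preceq \tfrac{1}{2}\sum_k(P_k^2+Q_k^2)$, and the mean-value trace inequality driving the differential inequality $m'(\theta)\le \theta\sigma^2 m(\theta)$. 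The proof is correct and nothing is missing; for the purposes of this paper it suffices to cite the result, but your write-up would serve as a self-contained appendix proof if one were desired.
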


Next we present a version of the McDiarmid inequality for self-reproducing random matrices.
\begin{proposition}
\cite{mackey2014matrix} Let $z = (Z_1, ..., Z_n)$ be a random vector taking values in a space $\mathcal{Z}$, and, for each index k, let $Z'_k$ and $Z_k$ be conditionally  i.i.d. given $(Z_j)_{j \neq k }$. Suppose that $H: \mathcal{Z} \rightarrow \mathcal{H}^d$ is a function that satisfies the self-reproducing property
\begin{equation}
\label{eqn:onewiths}
\sum_{k=1}^{n} (H(z) - \mathbb{E}[H(z) | (Z_{j})_{j\neq k}]) = s \cdot (H(z) - \mathbb{E}[H(z)] ) \text{ a.s. }
\end{equation}
for a parameter $s > 0$, as well as the bounded difference property
\begin{equation}
\mathbb{E}[(H(z) - H(Z_{1}, ..., Z_{k}^{'}, ..., Z_{n}))^{2} | z ] \preceq P^{2}_{k}
\end{equation}
for each index k a.s., where $P_k$ is a deterministic matrix in $\mathcal{H}^d$. Then, for all $t \geq 0$,
\begin{equation}
\mathrm{P}\{\lambda_{\max}(H(z) - \mathbb{E}[H(z)]) \geq t\} \leq d \cdot e^{-st^2 / L}
\end{equation}
for $L = \norm{ \sum_{k=1}^{n} P^{2}_{k} } $. 


\end{proposition}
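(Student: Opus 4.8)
\emph{Proof plan.} The strategy is to read the hypotheses as precisely the data needed to manufacture a matrix Stein pair out of $H(z)$, and then to run the matrix Laplace-transform argument for exchangeable pairs. First I would construct the pair: draw an index $K$ uniformly from $\{1,\dots,n\}$, independent of $z$, and form $z^{(K)}$ by replacing the $K$-th coordinate of $z$ with its independent copy $Z_K'$. Since $Z_K$ and $Z_K'$ are conditionally i.i.d.\ given $(Z_j)_{j\neq K}$, the pair $(z,z^{(K)})$ is exchangeable, and hence so is $(X,X')$ with
\begin{equation}
X = H(z) - \mathbb{E}[H(z)], \qquad X' = H\big(z^{(K)}\big) - \mathbb{E}[H(z)],
\end{equation}
where the essential point is that both matrices are centered by the \emph{same} deterministic matrix $\mathbb{E}[H(z)]$, which is what keeps the pair exchangeable.

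Next I would extract the scale factor. Conditioning on $z$ and averaging over $K$,
\begin{equation}
\mathbb{E}[X - X' \mid z] \;=\; \frac{1}{n}\sum_{k=1}^{n}\big(H(z) - \mathbb{E}[H(z)\mid (Z_j)_{j\neq k}]\big) \;=\; \frac{s}{n}\big(H(z) - \mathbb{E}[H(z)]\big) \;=\; \frac{s}{n}\,X,
\end{equation}
where the middle equality is exactly the self-reproducing property (\ref{eqn:onewiths}). Thus $(X,X')$ is a matrix Stein pair with scale factor $\beta = s/n$ (and in particular $\mathbb{E}[X]=0$). The associated conditional-variance proxy is then controlled by the bounded-difference hypothesis:
\begin{equation}
\begin{aligned}
\Delta_X \;:=\; \frac{1}{2\beta}\,\mathbb{E}\big[(X-X')^2 \mid z\big] &\;=\; \frac{1}{2s}\sum_{k=1}^{n}\mathbb{E}\big[(H(z) - H(Z_1,\dots,Z_k',\dots,Z_n))^2 \mid z\big] \\
&\;\preceq\; \frac{1}{2s}\sum_{k=1}^{n} P_k^2 \;\preceq\; \frac{L}{2s}\,I ,
\end{aligned}
\end{equation}
the last step using $\sum_k P_k^2 \preceq \norm{\sum_k P_k^2}\,I = L\,I$.

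At this point I would appeal to the master tail bound for matrix Stein pairs whose conditional variance is dominated by a multiple $cI$ of the identity (here $c = L/2s$). Concretely, the kernel identity $\mathbb{E}\trace[X e^{\theta X}] = \frac{1}{2\beta}\mathbb{E}\trace[(X-X')(e^{\theta X}-e^{\theta X'})]$ — which follows from exchangeability together with the Stein relation — combined with a mean-value trace inequality and one more use of exchangeability yields the differential inequality $m'(\theta) \le \theta c\, m(\theta)$ for $m(\theta) = \mathbb{E}\trace e^{\theta X}$ and $\theta > 0$. Since $m(0) = d$, integrating gives $m(\theta) \le d\,e^{c\theta^2/2}$; a Markov bound $\mathrm{P}\{\lambda_{\max}(X) \ge t\} \le e^{-\theta t}\,\mathbb{E}\trace e^{\theta X}$ followed by optimizing at $\theta = t/c$ then produces $\mathrm{P}\{\lambda_{\max}(X) \ge t\} \le d\,e^{-t^2/2c} = d\,e^{-st^2/L}$, which is the claim. (Equivalently, one may simply cite the corresponding master concentration inequality for matrix Stein pairs from \cite{mackey2014matrix} and substitute $\beta = s/n$ together with the bound on $\Delta_X$.)

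The routine parts are the construction of the resampled pair and the two short computations that produce $\beta = s/n$ and $\Delta_X \preceq (L/2s)\,I$. The delicate step — and the one I expect to be the main obstacle — is the last paragraph: establishing the kernel identity with enough care about integrability so that conditional expectations may be moved through the trace and the matrix exponential, and then controlling the \emph{non-commuting} product $(X-X')(e^{\theta X}-e^{\theta X'})$ by a mean-value trace inequality so that it is dominated by $\theta^2\,\trace[\Delta_X e^{\theta X}]$. That estimate is where the matrix nature of the problem genuinely enters; everything preceding it is bookkeeping tailored to match the hypotheses.
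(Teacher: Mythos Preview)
The paper does not give its own proof of this proposition: it is stated with a citation to \cite{mackey2014matrix} and used as a black box, so there is no in-paper argument to compare against. Your proposal is a correct and faithful reconstruction of the exchangeable-pairs proof from that reference: the uniform-resampling construction of $(z,z^{(K)})$, the identification of the scale factor $\beta=s/n$ from the self-reproducing identity, the bound $\Delta_X\preceq(L/2s)I$ from the bounded-difference hypothesis, and the Laplace-transform/mean-value-trace step yielding $d\,e^{-st^2/L}$ are exactly the ingredients of the Mackey et al.\ argument, and your flagging of the kernel identity and the non-commuting mean-value estimate as the genuinely matrix-theoretic step is accurate.
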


Now we provide an essential property that is called symmetrization, which is a generalization for summation of the symmetrization property presented in \cite{mackey2014matrix} for a single matrix:
\begin{lemma}
Let $\{X_i\}_{i=1}^{n}$ be a sequence of random Hermitian matrices with $\mathbb{E}[X_i] = 0$. Then
\begin{equation}
\mathbb{E}\left[\textup{tr}\left( \textup{e}^{\sum_{i=1}^{n}X_{i}}\right)\right] \leq \mathbb{E}\left[\textup{tr}\left( \textup{e}^{2\sum_{i=1}^{n}\epsilon_i X_{i}}\right)\right]
\end{equation}
where $\{\epsilon_i\}_{i=1}^{n}$ are i.i.d. Radamacher random variables.
\end{lemma}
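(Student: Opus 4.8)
The plan is to lift the classical symmetrization argument, whose single-matrix version is \cite{mackey2014matrix}, to a sum, using throughout the fact that $A \mapsto \mathrm{tr}\, e^{A}$ is convex on $\mathcal{H}^{d}$ and hence that $\mathrm{tr}\, e^{\frac{1}{2}A + \frac{1}{2}B} \le \frac{1}{2}\,\mathrm{tr}\, e^{A} + \frac{1}{2}\,\mathrm{tr}\, e^{B}$. First I would introduce, on a possibly enlarged probability space, a copy $\{X_i'\}_{i=1}^{n}$ of the sequence that has the same joint law as $\{X_i\}_{i=1}^{n}$ and is independent of it. Since $\mathbb{E}[X_i'] = 0$, for every fixed realization of $\{X_i\}$ we have $\sum_{i} X_i = \mathbb{E}'\big[\sum_i (X_i - X_i')\big]$, where $\mathbb{E}'$ is the expectation over the primed copy only; applying Jensen's inequality to the convex functional $\mathrm{tr}\, e^{(\cdot)}$ and then taking the outer expectation gives
\begin{equation}
\mathbb{E}\big[\mathrm{tr}\, e^{\sum_{i} X_i}\big] \;\le\; \mathbb{E}\big[\mathrm{tr}\, e^{\sum_{i}(X_i - X_i')}\big].
\end{equation}

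Next I would symmetrize the right-hand side. When the $X_i$ are independent --- which is the setting in which the lemma is invoked (after conditioning on the relevant $\sigma$-algebra) --- each $X_i - X_i'$ is a symmetric random matrix and the whole sequence $(X_i - X_i')_{i}$ has the same joint law as $(\epsilon_i (X_i - X_i'))_{i}$ for independent Rademacher variables $\{\epsilon_i\}$, so that $\sum_i (X_i - X_i') \overset{d}{=} \sum_i \epsilon_i (X_i - X_i')$. Writing $\sum_i \epsilon_i(X_i - X_i') = \frac{1}{2}\big(2\sum_i \epsilon_i X_i\big) + \frac{1}{2}\big({-}2\sum_i \epsilon_i X_i'\big)$ and using the convexity bound above,
\begin{equation}
\mathrm{tr}\, e^{\sum_i \epsilon_i(X_i - X_i')} \;\le\; \tfrac{1}{2}\,\mathrm{tr}\, e^{2\sum_i \epsilon_i X_i} + \tfrac{1}{2}\,\mathrm{tr}\, e^{-2\sum_i \epsilon_i X_i'}.
\end{equation}
Taking expectations and using $(-\epsilon_i)_i \overset{d}{=} (\epsilon_i)_i$ and $\{X_i'\} \overset{d}{=} \{X_i\}$ (both independent of the signs), both summands on the right equal $\mathbb{E}[\mathrm{tr}\, e^{2\sum_i \epsilon_i X_i}]$, and chaining this with the Jensen bound above finishes the argument. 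An equivalent route is induction on $n$, with the single-matrix symmetrization lemma of \cite{mackey2014matrix} as the base case and its ``shifted'' form $\mathbb{E}[\mathrm{tr}\, e^{M + X}] \le \mathbb{E}[\mathrm{tr}\, e^{M + 2\epsilon X}]$ (valid for deterministic $M \in \mathcal{H}^{d}$ and mean-zero $X$) applied to one matrix at a time, conditioning on the others; this version makes the use of independence completely explicit.

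The step I expect to be the main obstacle is the coordinate-wise Rademacher symmetrization $\sum_i (X_i - X_i') \overset{d}{=} \sum_i \epsilon_i(X_i - X_i')$: it must be performed at the level of distributions, \emph{before} the nonlinear map $\mathrm{tr}\, e^{(\cdot)}$ is applied, and it is precisely here that independence of the $X_i$ is needed --- under the bare mean-zero hypothesis one can flip all signs simultaneously but not individually, which yields only the weaker bound $\mathbb{E}[\mathrm{tr}\, e^{\sum_i X_i}] \le \mathbb{E}[\mathrm{tr}\, e^{2\epsilon\sum_i X_i}]$ with a single shared sign $\epsilon$. The constant $2$ in the statement is therefore not slack to be removed: it is the reciprocal of the convex weight $\tfrac{1}{2}$ used to separate the $\{X_i\}$ and $\{X_i'\}$ contributions.
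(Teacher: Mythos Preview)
Your argument is correct and tracks the paper's proof through the first two steps---introduce an independent copy $\{X_i'\}$, apply Jensen to pull the expectation inside, then insert Rademacher signs by the coordinate-wise symmetry of $(X_i-X_i')$---but diverges in how the $X$ and $X'$ contributions are separated at the end. The paper applies the Golden--Thompson inequality $\mathrm{tr}\,e^{A+B}\le\mathrm{tr}(e^{A}e^{B})$ and then Cauchy--Schwarz twice (once for the Hilbert--Schmidt inner product to reach $\mathrm{tr}(e^{2A})^{1/2}\mathrm{tr}(e^{2B})^{1/2}$, once in expectation), finishing with the equality of laws of the two factors. You instead use convexity of $A\mapsto\mathrm{tr}\,e^{A}$ to bound $\mathrm{tr}\,e^{\frac{1}{2}A+\frac{1}{2}B}\le\frac{1}{2}\mathrm{tr}\,e^{A}+\frac{1}{2}\mathrm{tr}\,e^{B}$ directly and then average. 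Your route is more elementary---it sidesteps Golden--Thompson entirely---and makes the origin of the factor $2$ transparent as the reciprocal of the convex weight. Your observation that the per-coordinate sign flip $\sum_i(X_i-X_i')\overset{d}{=}\sum_i\epsilon_i(X_i-X_i')$ requires independence of the $X_i$ is well taken; the paper's proof invokes the same step (``the symmetry of $(X_i-X_i')$'') without making this hypothesis explicit, so your caveat applies there as well.
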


\begin{proof}
First, we construct a sequence of copies $\{X'_i\}_{i=1}^{n}$ independent from  $\{X_i\}_{i=1}^{n}$, and let $\mathbb{E}'$ denote the expectation with respect to $\{X'_i\}_{i=1}^{n}$. So we have
\begin{multline}
 \mathbb{E}\left[\text{tr}\left( \textup{e}^{\sum_{i=1}^{n}X_{i}}\right)\right] = \mathbb{E}\left[\text{tr}\left( \textup{e}^{\sum_{i=1}^{n}X_{i} - \mathbb{E}'[X'_{i}]}\right)\right] \leq  \\
 \mathbb{E}\left[\text{tr}\left( \textup{e}^{\sum_{i=1}^{n}X_{i} - X'_{i}}\right)\right] = \mathbb{E}\left[\text{tr}\left( \textup{e}^{\sum_{i=1}^{n}\epsilon_{i}(X_{i} - X'_{i})}\right)\right] 
\end{multline}
where we have sequentially used Jensen's inequality and then the symmetry of $(X_i - X'_i)$. Now we finish the proof by noting
\begin{multline}
 \mathbb{E}\left[\text{tr}\left( \textup{e}^{\sum_{i=1}^{n}X_{i}}\right)\right] \leq \mathbb{E}\left[\text{tr}\left( \textup{e}^{\sum_{i=1}^{n}\epsilon_{i}(X_{i} - X'_{i})}\right)\right] \leq  \\
 \mathbb{E}\left[\text{tr}\left( \text{e}^{\sum_{i=1}^{n}\epsilon_{i}X_{i}} \text{e}^{-\sum_{i=1}^{n}\epsilon_{i}X'_{i}}\right)\right] \leq \\
 \mathbb{E}\left[\text{tr}\left( \text{e}^{2\sum_{i=1}^{n}\epsilon_{i}X_{i}}\right)^{1/2} \text{tr}\left(\text{e}^{-2\sum_{i=1}^{n}\epsilon_{i}X'_{i}}\right)^{1/2}\right] =\\
 \mathbb{E}\left[\text{tr}\left( \textup{e}^{2\sum_{i=1}^{n}\epsilon_i X_{i}}\right)\right]
\end{multline}
where we have sequentially used the Golden-Thompson inequality, the Cauchy-Schwartz inequality two times, and the fact that both factors are identically distributed. (See \cite{bhatia2013matrix} for the definition of those properties.)
\end{proof}

\section{LTI System with Switching}

\label{Sec3}

We consider a MIMO LTI system that allows the controller to switch between two sets of sensors, and we will assume that both the measurement and process noise have stochastic distributions with a bounded support. Namely, we will assume that the noise vectors have bounded norm almost surely. 

\subsection{LTI Formulation}

Consider a MIMO LTI system with partial observations and switching in the sensing
\begin{equation} \label{LTI-SYS}
\begin{aligned}
x_{n+1} &= Ax_n + Bu_n + w_n \\
y_n &= C(\alpha_{n}) x_n + z_n(\alpha_n) + \alpha_n v_n \\
\end{aligned} 
\end{equation}
where $x \in\mathbb{R}^p$, $u\in\mathbb{R}^q$, $y,z,v\in\mathbb{R}^m$, and $\alpha_n\in\{0,1\}$. The $w_n$ represents zero mean i.i.d. process noise with covariance $\Sigma_W$. Moreover, we have
\begin{equation}
\begin{aligned}
&C_n = C(\alpha_n) = \alpha_n C_{1} +  (1-\alpha_n)C_{2}\\
&z_n(\alpha_n) = \alpha_n \zeta_{n} + (1 - \alpha_n) \eta_{n}
\end{aligned}
\end{equation}
where $\zeta_{n}$ and $\eta_{n}$ represent zero mean i.i.d. measurement noise with covariance matrices $\Sigma_{\zeta} \preceq \Sigma_{\eta}$, respectively. Note that $\alpha_n \in \{0, 1\}$ should be interpreted as the switching control action that selects between the observability matrices $C_1$ or $C_2$. The $v_n$ is as an additive measurement disturbance added by an attacker, which can only affect the observations made when the mode $\alpha = 1$ is selected. The idea of this model is that $C_1$ corresponds to a more accurate set of sensors than $C_2$, but conversely that some subset of sensors within $C_1$ are susceptible to an attack whereas the set of all sensors within $C_2$ are \emph{not} susceptible to an attack. We further assume the process noise is independent of the measurement noise, that is $w_n$ for $n\geq0$ is independent of $\zeta_{n},\eta_n$ for $n\geq0$. Lastly we assume both measurement and disturbance noises are bounded in magnitude. Namely, we assume that both measurement noise and systems disturbances are given by i.i.d. bounded random vectors: $\norm{w_k} \leq K_{w}$ and $\norm{z_k} \leq K_{z},\forall k \geq 0$. 

If $(A,B)$ is stabilizable and both $(A,C_1)$ and $(A,C_2)$ are detectable, then an output-feedback controller can be designed when $v_n\equiv 0$ using an observer and the separation principle.  Let $K$ be a constant state-feedback gain matrix such that $A+BK$ is Schur stable, and let $L_{i}$ be a constant observer gain matrix such that $A+L_{i}C_{i}$ is Schur stable for $i \in \{1, 2\}$. The idea of dynamic watermarking in this context will be to superimpose a private (and random) excitation signal $e_n$ known in value to the controller but unknown in value to the attacker.  As a result, we will apply the control input $u_n = Kx'_n + e_n$, where $x'_n$ is the observer-estimated state and $e_n$ are i.i.d. random vectors on a bounded support, such that $\norm{e_k} \leq K_e, \forall k \geq 0$, with zero mean and constant variance $\Sigma_E$ fixed by the controller. Let
\begin{equation}
\begin{aligned}
&L(\alpha) =\alpha L_{1} +  (1-\alpha)L_{2}\\
&L_n = L(\alpha_n)\\
&\underline{L(\alpha)}^\top = \begin{bmatrix} 0 & -L(\alpha)^\top\end{bmatrix}
\end{aligned}
\end{equation}
Moreover, let $\tilde{x}^\top = \begin{bmatrix} x^\top & x'^\top\end{bmatrix}$, and define:
\begin{equation}
\begin{aligned}
\underline{B}^\top &= \begin{bmatrix} B^\top & B^\top\end{bmatrix} \text{, } \underline{D}^\top = \begin{bmatrix} \mathbb{I} & 0\end{bmatrix}\text{, and} \\ 
\underline{A}(\alpha)&= \begin{bmatrix} A & BK \\ -L(\alpha)C(\alpha) & A+BK+L(\alpha)C(\alpha)\end{bmatrix}.
\end{aligned}
\end{equation}
Then the closed-loop system with private excitation is given by:
\begin{equation}
\tilde{x}_{n+1} = \underline{A}(\alpha_{n})\tilde{x}_n + \underline{B}e_n + \underline{D}w_n + \underline{L}(\alpha_{n})(z_n(\alpha_{n})+\alpha_{n} v_n).
\end{equation}
If we define the observation error $\delta' = x'-x$, then with the change of variables $\check{x}^\top = \begin{bmatrix} x^\top & \delta'^\top\end{bmatrix}$ we have the dynamics
\begin{equation}
\check{x}_{n+1} = \doubleunderline{A}(\alpha_{n})\check{x}_n + \doubleunderline{B}e_n + \doubleunderline{D}w_n + \doubleunderline{L}(\alpha_{n})(z_n(\alpha)+\alpha v_n)
\end{equation}
where we further define the following matrices
\begin{equation}
\begin{aligned}
\doubleunderline{B}^\top = \begin{bmatrix}B^\top & 0\end{bmatrix} \text{, } \doubleunderline{D}^\top = \begin{bmatrix} \mathbb{I} & -\mathbb{I}\end{bmatrix} \text{, } \doubleunderline{L}(\alpha) = \underline{L}(\alpha), \\
\text{and }\doubleunderline{A}(\alpha) = \begin{bmatrix} A+BK & BK \\ 0 & A+L(\alpha)C(\alpha)\end{bmatrix}.
\end{aligned}
\end{equation}
Recall that $\doubleunderline{A}(\alpha)$ is Schur stable whenever $A+BK$ and $A+L(\alpha)C(\alpha)$ are both Schur stable.

There is one technical point that needs to be addressed before proceeding: Since there is switching between observers, the closed-loop system will not necessarily be stable even though $A+BK$ and $A+L(\alpha)C(\alpha)$ are both Schur stable. One approach to resolving this issue is limiting the rate of switching, as follows:

\begin{proposition}
\label{prop:swr}
let $P$ be the solution of the Lyapunov equation 
\begin{equation}
\doubleunderline{A}(1)P\doubleunderline{A}(1)^\top - P = -\mathbb{I},
\end{equation}
where $\mathbb{I}$ is the identity matrix. Let $\tau$ be the smallest positive integer such that
\begin{equation}
\doubleunderline{A}(0)^\tau P(\doubleunderline{A}(0)^\tau)^\top - P \leq -\mathbb{I}.
\end{equation}
Then the the closed-loop system is stable under switching policies where: whenever we switch from $\alpha = 1$ to $\alpha = 0$ we maintain $\alpha = 0$ for at least $\tau$ time steps before any possible switching occurs to $\alpha = 1$.
\end{proposition}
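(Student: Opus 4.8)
The plan is to read Proposition~\ref{prop:swr} as the discrete-time version of a dwell-time stability statement for switched systems: produce a common weighted norm in which mode $1$ strictly contracts at every step and mode $0$ strictly contracts over every window of $\tau$ steps, and then let the switching rule do the rest. I would first reduce to the unforced dynamics $\check{x}_{n+1}=\doubleunderline{A}(\alpha_n)\check{x}_n$, since the process and measurement noise, the watermark $e_n$, and any admissible attack while $\alpha_n=1$ all enter additively with uniformly bounded norm (by $K_w,K_z,K_e$ and the attack bound), so global exponential stability of the unforced switched system upgrades to uniform ultimate boundedness of the forced one by the usual superposition estimate. Because $\doubleunderline{A}(1)$ is Schur, the Lyapunov equation has the unique positive-definite solution $P=\sum_{k\ge 0}\doubleunderline{A}(1)^k(\doubleunderline{A}(1)^k)^{\top}$; since the $k=0$ term is $\mathbb{I}$ we get $P\succeq\mathbb{I}$, hence $P-\mathbb{I}\succeq 0$, and together with $\doubleunderline{A}(0)^k\to 0$ (mode $0$ Schur) this makes $\tau$ well defined, under the mild nondegeneracy the statement implicitly presumes (e.g. $\doubleunderline{A}(1)$ invertible, so that $P-\mathbb{I}\succ 0$). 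I then work in the coordinates $\xi_n=P^{-1/2}\check{x}_n$, equivalently with $V(\check{x})=\check{x}^{\top}P^{-1}\check{x}$.

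The two contraction estimates drop straight out of the hypotheses. Abbreviating $\tilde{M}=P^{-1/2}MP^{1/2}$: the Lyapunov equation gives $\tilde{\doubleunderline{A}}(1)\tilde{\doubleunderline{A}}(1)^{\top}=P^{-1/2}(P-\mathbb{I})P^{-1/2}=\mathbb{I}-P^{-1}\preceq\rho\,\mathbb{I}$ with $\rho:=1-1/\lambda_{\max}(P)\in(0,1)$, so $\norm{\tilde{\doubleunderline{A}}(1)}\le\sqrt{\rho}$; and the inequality defining $\tau$ gives likewise $\tilde{\doubleunderline{A}}(0)^{\tau}(\tilde{\doubleunderline{A}}(0)^{\tau})^{\top}\preceq\mathbb{I}-P^{-1}\preceq\rho\,\mathbb{I}$, so $\norm{\tilde{\doubleunderline{A}}(0)^{\tau}}\le\sqrt{\rho}$ (the rest being routine). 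Hence in the $\xi$-coordinates every mode-$1$ step shrinks $\norm{\xi}$ by a factor at most $\sqrt{\rho}$, and every block of $\tau$ consecutive mode-$0$ steps shrinks it by a factor at most $\sqrt{\rho}$; moreover $c_0:=\norm{\tilde{\doubleunderline{A}}(0)}<\infty$, and since $\doubleunderline{A}(0)$ is Schur (so $\tilde{\doubleunderline{A}}(0)^k\to 0$) the transient constant $c^{\ast}:=\sup_{k\ge 0}\norm{\tilde{\doubleunderline{A}}(0)^k}$ is finite.

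It remains to aggregate along the switching signal. The dwell-time rule forces the trajectory, after at most one initial partial mode-$0$ segment (which costs only the fixed factor $c^{\ast}$), to split into alternating mode-$1$ blocks of length $a_i\ge 1$ and mode-$0$ blocks of length $b_i\ge\tau$; writing $b_i=\tau\lfloor b_i/\tau\rfloor+r_i$ with $0\le r_i<\tau$, a mode-$1$ block contributes a factor $\le\rho^{a_i/2}\le\sqrt{\rho}$ and a mode-$0$ block a factor $\le c^{\ast}\rho^{\lfloor b_i/\tau\rfloor/2}$, so multiplying over the blocks completed by time $n$ bounds $\norm{\xi_n}/\norm{\xi_0}$ by $(c^{\ast})^{N}\rho^{E/2}$, where $N$ is the number of mode-$0$ blocks and $E=\sum_i(a_i+\lfloor b_i/\tau\rfloor)\to\infty$ as $n\to\infty$. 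Once one checks this product tends to $0$ we obtain $\check{x}_n\to 0$, and re-reading the same bound with the bounded forcing terms gives $\limsup_n\norm{\check{x}_n}\le\mathrm{const}\cdot\max\{K_w,K_z,K_e,\sup_n\norm{v_n}\}$. I expect that check to be the real obstacle: $\sum_i\lfloor b_i/\tau\rfloor$ need only grow like $N$ while $(c^{\ast})^{N}$ can grow when $\doubleunderline{A}(0)$ is non-normal ($c^{\ast}>1$), so one must control the transient overshoot that a mode-$0$ interval of length strictly between $\tau$ and $2\tau$ can produce before the next full $\tau$-block contraction acts, leaning here on $\doubleunderline{A}(0)$ being Schur (hence the overshoot uniformly bounded) rather than on the single inequality defining $\tau$ alone. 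An alternative that sidesteps the overshoot is the two-metric average-dwell-time argument, using $V$ on mode-$1$ intervals and the analogous $V_0(\check{x})=\check{x}^{\top}P_0^{-1}\check{x}$ on mode-$0$ intervals, each monotone, paying a fixed factor at each switch that a mode-$0$ dwell of length $\ge\tau$ then dominates once $\tau$ exceeds an explicit threshold; matching that threshold to the $P$-based $\tau$ in the statement is the only remaining subtlety.
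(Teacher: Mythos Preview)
The paper does not actually prove Proposition~\ref{prop:swr}; it is stated and then followed only by the remark that such a $\tau$ exists because $\doubleunderline{A}(0)$ is Schur stable. So there is no paper proof to compare against, and your proposal is effectively a standalone attempt.

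Your reduction to the unforced dynamics and your choice of the weighted norm $V(\check{x})=\check{x}^\top P^{-1}\check{x}$ (equivalently the $\xi=P^{-1/2}\check{x}$ coordinates) are the natural moves, and the two one-line computations yielding $\|\tilde{\doubleunderline{A}}(1)\|\le\sqrt{\rho}$ and $\|\tilde{\doubleunderline{A}}(0)^{\tau}\|\le\sqrt{\rho}$ with $\rho=1-1/\lambda_{\max}(P)$ are correct. This is exactly the standard single-Lyapunov dwell-time approach one would expect.

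You are also right that the gap is real, and you locate it precisely. Decomposing each mode-$0$ sojourn of length $b_i\ge\tau$ as $\lfloor b_i/\tau\rfloor$ full $\tau$-blocks plus a residual of length $r_i<\tau$ leaves the factor $(c^{\ast})^{N}\rho^{E/2}$, and nothing in the hypotheses prevents $c^{\ast}=\max_{0\le k<\tau}\|\tilde{\doubleunderline{A}}(0)^{k}\|$ from exceeding $1/\rho$; in that regime a switching signal that alternates one mode-$1$ step with $\tau+1$ mode-$0$ steps gives a per-cycle factor bounded only by $\rho\,c^{\ast}$, which need not be $<1$. Appealing to ``$\doubleunderline{A}(0)$ Schur'' alone does not rescue this, since Schur stability bounds $c^{\ast}$ but not relative to $\rho$. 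Your two-metric alternative (a Lyapunov function $P_0$ for mode $0$ with per-step decay, paying a fixed condition-number factor at each switch) is the right repair, but then the $\tau$ that makes it work is determined by $P_0$ and the switch cost, and you correctly flag that matching that threshold to the specific $P$-based $\tau$ in the statement is the remaining step --- and it is not obvious that it matches. In short: your skeleton is the standard one, you have honestly identified the obstruction, and as stated the proposition seems to require either a slightly larger $\tau$ or an extra hypothesis controlling the mode-$0$ transient in the $P$-norm; the paper, having given no proof, does not resolve this either.
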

Lastly, we note that such a $\tau$ exists because $\doubleunderline{A}(0)$ is Schur stable. 

\section{Matrix Inequalities for General LTI Systems}

\label{Sec4}

We will now apply the abstract concentration inequalities presented in Sect. \ref{sec2} to our LTI setting with switching. We will begin our analysis consider that the system is under no attack.  Under no attack we would like to keep using the most accurate sensor -- that is keeping our switching control $\alpha_n \equiv 1$ for all $n\geq 0$. However, as it is usually observed for any kind of tests based on random quantities, we are susceptible to commit what is commonly known as false positive or type I errors. Hence our goal is to provide finite sample tests based on matrix concentration of measure such that type I errors happen only a finite number of times throughout the evolution of the system. This would imply that those tests report correctly that there is no attack infinitely often. To that end, we will utilize two observers: The first observer obtain system measurements from the switched system, using $C(\alpha_n)$; The second observer never switches and keeps measuring the system using the vulnerable sensor, using $C_1$. The finite-time statistical tests and the concentration inequalities analysis presented in this section are referring to quantities associated with the second observer. For ease of notation and presentation we drop the subscript of the analysis define $C = C_1$ and $L=L_1$. Moreover, for the second observer we define: $\hat{x}_{n}$ and $\delta_{n}$ to denote the estimate state and observation error:
\begin{equation}  \label{xhat}
\hat{x}_{n+1} = (A + BK) \hat{x}_{n} + LC(\hat{x}_{n} - x_n) + Be_n - L z_{n}
\end{equation}
and $\delta_n = \hat{x}_{n} - x_{n}$. Then by the same type of variable substitution:
\begin{equation} \label{deltahat}
\delta_{n+1} = (A+LC)\delta_{n} - w_{n} + -L z_n
\end{equation}
We will start by bounding the vector $C\delta_n -z_n$:
\begin{theorem}
Let $\delta_n = \hat{x}_{n} - x_{n}$. Assume that both measurement noise and systems disturbances are given by i.i.d. bounded random vectors: $\norm{w_k} \leq K_{w} \text{ and } \norm{z_k} \leq K_{z} , \forall k \geq 0$. Then when $v_n\equiv 0$ for all $n\geq 0$ we have
\begin{equation}
\|C\delta_{n} - z_n\| \leq \bar{K}_{n}
\end{equation}
where $\bar{K}_{n} = K_z + \sum_{k=0}^{n-1} \norm{C\bar{D}_{k}} K_{w} + \norm{C\bar{L}_{k}}K_{z}$
and
\begin{equation}
(C\delta_{n} - z_{n})(C\delta_{n} - z_{n})^{\top} \preceq \bar{K}^{2}_{n}\mathbb{I}.
\end{equation}
Moreover, it follows that
\begin{equation}
\begin{aligned}
 \mathbb{E}[(C\delta_{n}& - z_n)(C\delta_{n} - z_n)^{\top}]= \\
&C\left(\sum_{k=0}^{n-1} \bar{D}_{k}\Sigma_{w}\bar{D}^{\top}_{k} + \bar{L}_{k}\Sigma_{z}\bar{L}^{\top}_{k}\right) C^{\top} + \Sigma_{z}
\end{aligned}
\end{equation}
where
\begin{equation}
\begin{aligned}
    \bar{D}_{k} &=-(A+LC)^{n-1-k}\\
    \bar{L}_{k} &=-(A+LC)^{n-1-k}L^{\top}.
\end{aligned}
\end{equation}
\end{theorem}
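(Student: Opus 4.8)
The plan is to solve the error recursion~\eqref{deltahat} in closed form and then read off all three assertions from it; no concentration inequality is needed here, since this theorem merely records the deterministic envelope $\bar K_n$ and the exact second moment of $C\delta_n - z_n$ that the later finite-sample tests will compare against. Since~\eqref{deltahat} reads $\delta_{n+1} = (A+LC)\delta_n - w_n - Lz_n$, and, with the standard initialization $\hat x_0 = x_0$ (so that $\delta_0 = 0$), iterating it gives
\[
\delta_n \;=\; -\sum_{k=0}^{n-1}(A+LC)^{n-1-k}\bigl(w_k + Lz_k\bigr) \;=\; \sum_{k=0}^{n-1}\bigl(\bar D_k w_k + \bar L_k z_k\bigr),
\]
with $\bar D_k$, $\bar L_k$ as in the statement, so that $C\delta_n - z_n = \sum_{k=0}^{n-1}\bigl(C\bar D_k w_k + C\bar L_k z_k\bigr) - z_n$. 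Everything then reduces to manipulating this finite sum.

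For the scalar bound I would apply the triangle inequality and submultiplicativity of $\norm{\cdot}$ together with the almost-sure envelopes $\norm{w_k}\le K_w$ and $\norm{z_k}\le K_z$, which immediately yields $\norm{C\delta_n - z_n} \le \norm{z_n} + \sum_{k=0}^{n-1}\bigl(\norm{C\bar D_k}\norm{w_k} + \norm{C\bar L_k}\norm{z_k}\bigr) \le \bar K_n$. The semidefinite bound then follows from the elementary fact that any vector $\xi$ with $\norm{\xi}\le K$ obeys $\xi\xi^\top \preceq K^2\mathbb{I}$ --- because $x^\top \xi\xi^\top x = (\xi^\top x)^2 \le \norm{\xi}^2\norm{x}^2 \le K^2 x^\top x$ for every $x$ --- applied with $\xi = C\delta_n - z_n$ and $K = \bar K_n$, which is legitimate since the scalar bound holds almost surely.

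For the covariance identity I would expand $(C\delta_n - z_n)(C\delta_n - z_n)^\top$ using the closed form and take expectations, exploiting that the $w_k$ are i.i.d.\ zero mean with covariance $\Sigma_w$, the $z_k$ (the vulnerable-sensor noise) are i.i.d.\ zero mean with covariance $\Sigma_z$, and the two sequences are independent. Then every cross term between distinct time indices vanishes, every $w$--$z$ cross term vanishes, and --- the one point worth checking carefully --- the cross term between $\sum_{k=0}^{n-1}C\bar L_k z_k$ and $-z_n$ also vanishes, since $z_n$ carries index $n\notin\{0,\dots,n-1\}$ and is therefore independent of $z_0,\dots,z_{n-1}$. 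Only the diagonal terms survive, leaving $C\bigl(\sum_{k=0}^{n-1}\bar D_k\Sigma_w\bar D_k^\top + \bar L_k\Sigma_z\bar L_k^\top\bigr)C^\top + \Sigma_z$, as claimed.

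The argument is essentially bookkeeping rather than a genuine obstacle; the only thing that must be pinned down is the observer initialization, since a nonzero $\delta_0$ would contribute a transient $(A+LC)^n\delta_0$ to the closed form --- harmless because $A+LC$ is Schur, but it would force an extra term in $\bar K_n$ and a rank-one correction to the covariance --- so the statement as written presupposes $\delta_0 = 0$, which I would state explicitly at the outset. The remaining care is purely in tracking the independence structure so that all off-diagonal contributions to the expectation cancel.
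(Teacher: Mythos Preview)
Your proposal is correct and follows essentially the same route as the paper: solve the error recursion in closed form under the initialization $\delta_0 = 0$, then read off the norm bound via the triangle inequality and the covariance via independence of the noise sequences. You are in fact slightly more explicit than the paper about why the semidefinite bound follows from the scalar one and why the cross term between $-z_n$ and $\sum_{k<n} C\bar L_k z_k$ vanishes, and your flagging of the $\delta_0=0$ assumption matches exactly what the paper does (it states ``Assuming $\delta_0=0$'' at the corresponding point).
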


\begin{proof}
Recall our definition of $\delta_n$ (Eq. \ref{deltahat}) we can write:
\begin{equation}
\delta_{n} = (A + LC)^{n}\delta_{0} - \sum_{k=0}^{n-1} (A+LC)^{n-1-k}(\mathbb{I} w_k + L^{\top}z_{k}).
\end{equation}
Assuming $\delta_0 = 0$, we have that
\begin{equation}
\delta_{n}= \sum_{k=0}^{n-1} \bar{D}_{k}w_k + \bar{L}_{k}z_k.
\end{equation}
Now, we can define the following:
\begin{equation}
\begin{aligned}
&C\delta_{n}\delta_n^{\top} C^{\top}=\\ &C\left(\sum_{k=0}^{n-1} \bar{D}_{k}w_k + \bar{L}_{k}z_k\right)\left(\sum_{k=0}^{n-1} \bar{D}_{k}w_k + \bar{L}_{k}z_k\right)^{\top} C^{\top},
\end{aligned}
\end{equation}
and obtain the expectation directly:
\begin{equation} \label{ExpEq}
\begin{aligned}
\mathbb{E}[(C\delta_{n}& - z_n)(C\delta_{n} - z_n)^{\top}]= \\
&C\left(\sum_{k=0}^{n-1} \bar{D}_{k}\Sigma_{w}\bar{D}^{\top}_{k} + \bar{L}_{k}\Sigma_{z}\bar{L}^{\top}_{k}\right) C^{\top} + \Sigma_{z}.
\end{aligned}
\end{equation}
Since $z_n$ and $\delta_n$ are independent for all $n$. Moreover, both system disturbances and measurement noise are independent. Under our \textit{key} assumption that both measurement noise and systems disturbances are given by i.i.d. bounded random vectors we have that
\begin{equation}
\|\delta_{n}\| \leq \sum_{k=0}^{n-1} \norm{\bar{D}_{k}} K_{w} + \norm{\bar{L}_{k}}K_{z},
\end{equation}
and that
\begin{multline}
\|C\delta_{n} - z_n\| \leq \\ K_z + \sum_{k=0}^{n-1} \norm{C\bar{D}_{k}} K_{w} + \norm{C\bar{L}_{k}}K_{z} = \bar{K}_{n}.
\end{multline}
So we have $(C\delta_{n} - z_{n})(C\delta_{n} - z_{n})^{\top} \preceq \bar{K}^{2}_{n}\mathbb{I}.$
\end{proof}

Now consider the matrix (\ref{eqn:acttestjoint}) that was used in the introduction to define the asymptotic tests. But now, instead of letting $n$ go to infinity, we keep it finite and then analyze the finite summation of matrices. Let $k' = \min\{k\geq 0\ |\ C(A+BK)^kB \neq 0\}$. The existence of such $k'$ is guaranteed (see \cite{hespanhol2017dynamic}). Moreover, define
\begin{equation}
\psi^{\top}_n =  \begin{bmatrix} (C\hat{x}_n - y_n)^{\top} & e^{\top}_{n-k'-1}\end{bmatrix}.
\end{equation}
Then we have: 
\begin{flalign}
& \frac{1}{N}\sum_{n=0}^{N-1} \psi_n \psi_n^{\top} = \nonumber&\\
&  \frac{1}{N} \scalebox{0.9}{$\begin{bmatrix} \sum_{n=0}^{N-1} (C\hat{x}_n - y_n)(C\hat{x}_n - y_n)^{\top} & \sum_{n=0}^{N-1} (C\hat{x}_n - y_n)e^{\top}_{n-k'-1} \\ \sum_{n=0}^{N-1} e_{n-k'-1}(C\hat{x}_n - y_n)^{\top} & \sum_{n=0}^{N-1} e_{n-k'-1}e^{\top}_{n-k'-1}\end{bmatrix}$}&
\end{flalign}
It suits our purposes to make sure that the above matrix is centered (that is have zero expected value). In order to achieve this, we construct the matrix
\begin{equation}
\begin{aligned}
 \frac{1}{N}\sum_{n=0}^{N-1} \Psi_n &= \frac{1}{N}\sum_{n=0}^{N-1} \psi_n \psi_n^{\top} - \\
 & \frac{1}{N} \scalebox{0.9}{$\begin{bmatrix} \sum_{n=0}^{N-1} \mathbb{E}[(C\delta_{n} - z_n)(C\delta_{n} - z_n)^{\top}] & 0 \\ 0 & N\Sigma_{e}\end{bmatrix}$}
\end{aligned}
\end{equation}
Note that it follows that: $\mathbb{E}[ \Psi_n] = 0 , \forall n \geq 0$, since $C\hat{x}_n - y_n = C\delta_{n} - z_n $. We wish to control the singular values of the above matrix. We will do so by analyzing each individual block. To ease the notation we define
\begin{equation}
 \Phi_{N} = \frac{1}{N}\sum_{n=0}^{N-1} \Psi_{n} 
\end{equation}
and we define each submatrix
\begin{align}
\Phi^{(1)}_{N} &= \frac{1}{N}\sum_{n=0}^{N-1} (C\hat{x}_n - y_n)(C\hat{x}_n - y_n)^{\top}-\nonumber\\ &\quad\quad\quad\frac{1}{N}\sum_{n=0}^{N-1}\mathbb{E}[(C\hat{x}_n - y_n)(C\hat{x}_n - y_n)^{\top}] \\
\Phi^{(2)}_{N} &= \frac{1}{N} \sum_{n=0}^{N-1} (C\hat{x}_n - y_n)e^{\top}_{n-k'-1}\\
\Phi^{(3)}_{N} &= \frac{1}{N} \sum_{n=0}^{N-1} (e_{n-k'-1}e^{\top}_{n-k'-1} - \Sigma_{e} )
\end{align}
such that
\begin{equation}
\label{eqn:defphin}
\Phi_{N} = \begin{bmatrix}
\Phi^{(1)}_{N} & \Phi^{(2)}_{N} \\ (\Phi^{(2)}_{N})^{\top} & \Phi^{(3)}_{N}
\end{bmatrix}.
\end{equation}
Our next step is to bound the norm of $\Phi^{(1)}_{N}$.
\begin{theorem}
\label{thm:phi1n}
If $v_n\equiv 0$ for all $n\geq 0$, then the following concentration inequality holds for all $N\geq 1$ and all $t$:
\begin{equation}
\mathrm{P} \bigg( \norm{\Phi^{(1)}_{N}} \geq t\Bigg) \leq m \cdot e^{-N^{2}t^2 / c^{(1)}_{N}}
\end{equation}
where $c^{(1)}_{N} = 8 \norm{\sum_{k=0}^{N-1} \left(\bar{K}^{4}_{k}\mathbb{I}\right) }$.
\end{theorem}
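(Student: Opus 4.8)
The plan is to write $\Phi^{(1)}_{N} = \sum_{n=0}^{N-1} Y_n$ with
\begin{equation*}
Y_n = \tfrac{1}{N}\big((C\hat{x}_n - y_n)(C\hat{x}_n - y_n)^{\top} - \mathbb{E}[(C\hat{x}_n - y_n)(C\hat{x}_n - y_n)^{\top}]\big),
\end{equation*}
apply the Laplace transform (matrix Chernoff) bound to $\lambda_{\max}(\Phi^{(1)}_{N})$, and control the resulting matrix moment generating function via the symmetrization lemma instead of any independent-sum inequality. First I would note that, since $v_n\equiv 0$ and $C=C_1$, we have $C\hat{x}_n - y_n = C\delta_n - z_n$, so each $Y_n$ is Hermitian with $\mathbb{E}[Y_n]=0$. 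The preceding theorem supplies the almost-sure bound $(C\delta_n - z_n)(C\delta_n - z_n)^{\top}\preceq \bar{K}_n^2\mathbb{I}$; since expectation preserves $\preceq$, also $\mathbb{E}[(C\delta_n - z_n)(C\delta_n - z_n)^{\top}]\preceq \bar{K}_n^2\mathbb{I}$. Hence $NY_n$ is a difference of two positive semidefinite matrices each dominated by $\bar{K}_n^2\mathbb{I}$, so $-\bar{K}_n^2\mathbb{I}\preceq NY_n\preceq \bar{K}_n^2\mathbb{I}$, and therefore $\sum_{n=0}^{N-1} Y_n^2 \preceq \tfrac{1}{N^2}\big(\sum_{n=0}^{N-1}\bar{K}_n^4\big)\mathbb{I}$ almost surely. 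This is exactly where the standing assumption of \emph{bounded} (rather than Gaussian) noise is used: it is what converts second-moment information into an almost-sure spectral bound on the summands.

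The matrices $Y_n$ are \emph{not} independent, because the estimation errors $\delta_n$ all share the process and measurement noise samples $\{w_k,z_k\}$; this inter-temporal dependence is the main obstacle. A direct matrix Hoeffding bound for (conditionally) independent sums is therefore unavailable, and the matrix McDiarmid inequality does not apply either, since $\Phi^{(1)}_{N}$ is quadratic in the noises with a nonvanishing ``diagonal'' contribution and so fails the self-reproducing property. The way around this is the symmetrization lemma proved earlier, which bounds the m.g.f.\ of a dependent sum of centered Hermitian matrices by that of a Rademacher sum of the same matrices, and the latter requires only an almost-sure spectral bound on the summands. Concretely, for $\theta>0$ I would use $\mathrm{P}(\lambda_{\max}(\Phi^{(1)}_{N})\geq t)\le e^{-\theta t}\,\mathbb{E}[\text{tr}\,e^{\theta\Phi^{(1)}_{N}}]$, apply the symmetrization lemma to $\{\theta Y_n\}$ to get $\mathbb{E}[\text{tr}\,e^{\theta\sum_n Y_n}]\le \mathbb{E}[\text{tr}\,e^{2\theta\sum_n\epsilon_n Y_n}]$, then condition on $\{Y_n\}$ and invoke the standard matrix Rademacher m.g.f.\ bound $\mathbb{E}_\epsilon\,e^{2\theta\sum_n\epsilon_n Y_n}\preceq e^{2\theta^2\sum_n Y_n^2}$ together with the a.s.\ bound above and monotonicity of $B\mapsto\text{tr}\,e^{B}$, which gives
\begin{equation*}
\mathbb{E}\big[\text{tr}\,e^{\theta\Phi^{(1)}_{N}}\big] \;\le\; m\, e^{\,2\theta^2\,\|\sum_{k=0}^{N-1}\bar{K}_k^4\mathbb{I}\|/N^2}.
\end{equation*}

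Finally I would optimize the bound $\mathrm{P}(\lambda_{\max}(\Phi^{(1)}_{N})\geq t)\le m\,e^{-\theta t + 2\theta^2 R/N^2}$ over $\theta$, where $R=\|\sum_{k=0}^{N-1}\bar{K}_k^4\mathbb{I}\|$: the choice $\theta = tN^2/(4R)$ yields $\mathrm{P}(\lambda_{\max}(\Phi^{(1)}_{N})\geq t)\le m\,e^{-N^2 t^2/(8R)}$. Since $\Phi^{(1)}_{N}$ is Hermitian, $\|\Phi^{(1)}_{N}\| = \max\{\lambda_{\max}(\Phi^{(1)}_{N}),\lambda_{\max}(-\Phi^{(1)}_{N})\}$, so repeating the argument verbatim for $-\Phi^{(1)}_{N}=\sum_n(-Y_n)$ (noting $(-Y_n)^2=Y_n^2$, so the same $R$ applies) and combining the two one-sided estimates gives the stated inequality with $c^{(1)}_{N} = 8\,\|\sum_{k=0}^{N-1}\bar{K}_k^4\mathbb{I}\|$. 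The remaining work is routine: the elementary matrix-order manipulations in the first paragraph and the explicit $\theta$-minimization; the conceptual content is entirely in exploiting bounded noise and the symmetrization lemma to bypass the dependence among the $Y_n$.
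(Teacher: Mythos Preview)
Your proposal is correct and follows essentially the same route as the paper: both introduce Rademacher signs via the symmetrization lemma to bypass the dependence among the $Y_n$, and then bound the resulting Rademacher sum using the almost-sure spectral bound $Y_n^2\preceq \bar K_n^4\mathbb{I}$. The only cosmetic difference is that the paper packages the second step as an application of the conditional matrix Hoeffding inequality (Proposition~1) to $W_n=\epsilon_n Y_n$ given $Z=(Y_n)$, whereas you unpack that same step directly at the level of the Laplace transform and the matrix Rademacher m.g.f.\ bound.
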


\begin{proof}
We start by defining the matrix $Y_n$ as
\begin{equation}
Y_n = (C\delta_n - z_n)(C\delta_n - z_n)^{\top} - \mathbb{E}[(C\delta_n - z_n)(C\delta_n - z_n)^{\top}].
\end{equation}
Now define a vector of independent i.i.d. Radamacher random variables $\{\epsilon_n\}_{n=0}^{N-1}$. We use the symmetrization property to write 
\begin{equation}
\norm{\Phi^{(1)}_{N}} \leq \norm{\frac{1}{N}\sum_{n=0}^{N-1} Y_n} \leq \norm{\frac{2}{N}\sum_{n=0}^{N-1} \epsilon_n Y_n}.
\end{equation}
Now we define a filtration $Z =(Y_n)_{n\geq 1}$ where $W_n = \epsilon_n Y_n , n \geq 1$. Then we see that each summand $W_n$ is conditionally independent given $Z$, because the Radamacher random variables are all i.i.d. This allows us to use the Hoeffding Bound for conditionally independent sums to obtain
\begin{multline}
\mathrm{P}\left( \norm{\frac{1}{N}\sum_{n=0}^{N-1} Y_n} \geq t \right) \leq\\
\mathrm{P} \left( \norm{\frac{2}{N}\sum_{n=0}^{N-1} \epsilon_nY_n} \geq t\right)
\leq d \cdot e^{-N^{2}t^2 / 8\sigma^2}
\end{multline}
for $\sigma^2 = \norm{\sum_{k=0}^{N-1} \left(\bar{K}^{4}_{k}\mathbb{I}\right) }$.
The first inequality follows from applying the Laplace transform method and using the property
\begin{equation}
\mathbb{E}\left[\text{tr}\left( \textup{e}^{\sum_{i=1}^{n}X_{i}}\right)\right] \leq \mathbb{E}\left[\text{tr}\left( \textup{e}^{2\sum_{i=1}^{n}\epsilon_i X_{i}}\right)\right].
\end{equation}
We also used the fact $W^{2}_{k} \preceq \bar{K}^{4}_{k} \mathbb{I}$ for all $k$, and the fact that
\begin{equation}
\frac{1}{2}\norm{ \sum_{k} \bar{K}^{4}_{k} \mathbb{I} + E[W^2_{k}|(W_{j})_{j \neq k}]} \leq \norm{\sum_{k} \left(\bar{K}^{4}_{k}\mathbb{I}\right) }
\end{equation}
since $\mathbb{E}[W^2_{k}|(W_{j})_{j \neq k}] = \mathbb{E}[Y^2_{k}|(W_{j})_{j \neq k}] \preceq \bar{K}^{4}_{k} \mathbb{I}$.
\end{proof}

Next, we provide a bound on the norm of $\Phi^{(2)}_{N}$. But before that we need the following proposition:
\begin{proposition}
\label{prop:owh}
Let $e = (e_1,...,e_k,...,e_n)$ be a sequence of random vectors taking values in a space $\mathcal{Z}$. Now construct an exchangeable pair $ e' = (e_1,...,e'_k, ..., e_n)$ where $e_k$ and $e'_k$ are conditionally i.i.d. given $(e_j)_{j \neq k}$ and $k$ is an independent coordinate drawn uniformly from $\{1, ..., n\}$. We define 
\begin{equation}
H(e) = \begin{bmatrix} 0 & \sum_{n=0}^{N-1} (d_n)e^{\top}_{n-k'-1} \\ \sum_{n=0}^{N-1} e_{n-k'-1}(d_n)^{\top} & 0 \end{bmatrix}
\end{equation}
where $d_n =(C\delta_n - z_n)$. If $v_n\equiv 0$ for all $n\geq 0$, then the function $H(e)$ satisfies the bounded differences property
\begin{equation}
\begin{aligned}
 \mathbb{E}[(H(e) - H(e'))^2 | e] \preceq \bar{P}^{2}_{n}
\end{aligned}
\end{equation}
for $\bar{P}^{2}_{n}=  \max\{P^{2}_n,P^{'2}_n\}\mathbb{I}$ with positive constants $P^{2}_n,P^{'2}_n$:
\begin{equation}
P^{'2}_{n} = \norm{\mathbb{E}[Q'_n | e]} \leq \bar{K}^{2}_{n}(K^{2}_e + \norm{\Sigma_{E}}) 
\end{equation}
\begin{align}
P^{2}_n = \left(K^{2}_e + \textup{tr}(\Sigma_{E}\right)\times\\
\norm{C\left(\sum_{k=1}^{n-1} \bar{D}_{k}\Sigma_{w}\bar{D}^{\top}_{k} + \bar{L}_{k}\Sigma_{z}\bar{L}^{\top}_{k}\right) C^{\top} + \Sigma_{z}}
\end{align}
\end{proposition}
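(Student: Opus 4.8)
The plan is to collapse the bounded-difference computation to a single perturbed term by exploiting one structural fact: the observer error $\delta_n$, and hence $d_n = C\delta_n - z_n$, depends only on the process and measurement noise and is independent of the private excitation $e$. This is immediate from the recursion \eqref{deltahat}, which contains no $e$-term (equivalently, $\delta_n = \sum_{k=0}^{n-1}\bar D_k w_k + \bar L_k z_k$, as in the proof of Theorem~1). Consequently, writing $H(e) = \mathcal{D}\!\left(\sum_{n=0}^{N-1} d_n e_{n-k'-1}^\top\right)$ via linearity of the dilation map, the excitation vector $e_j$ enters $H(e)$ through exactly one summand, the one with $n - k' - 1 = j$, i.e.\ $n = j+k'+1$. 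Fixing the resampled coordinate $j$ (the uniform draw of the coordinate belongs to the Stein-pair bookkeeping and is irrelevant to the per-coordinate bound we need), linearity then gives, when $j+k'+1 \in \{0,\dots,N-1\}$,
\[
 H(e) - H(e') = \mathcal{D}\!\big(B\big), \qquad B = d_{n}(e_j - e'_j)^\top,\ \ n = j+k'+1,
\]
and $H(e) - H(e') = 0$ otherwise, so the claimed bound is trivial in that case.

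Next I would square using the dilation identity $\mathcal{D}(B)^2 = \begin{bmatrix} BB^\top & 0 \\ 0 & B^\top B \end{bmatrix}$, which yields
\[
 \big(H(e)-H(e')\big)^2 = \begin{bmatrix} \norm{e_j - e'_j}^2\, d_n d_n^\top & 0 \\ 0 & \norm{d_n}^2\,(e_j-e'_j)(e_j-e'_j)^\top \end{bmatrix},
\]
and take $\mathbb{E}[\,\cdot \mid e]$ block by block. Since $e'_j$ is an independent copy of $e_j$ with zero mean and covariance $\Sigma_E$, and is independent of the noise (hence of $d_n$), the scalar weights factor out of the matrix parts under the conditioning. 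For the top block, $\mathbb{E}[\norm{e_j - e'_j}^2 \mid e] = \norm{e_j}^2 + \mathrm{tr}(\Sigma_E) \le K_e^2 + \mathrm{tr}(\Sigma_E)$ and $\mathbb{E}[d_n d_n^\top] = C\!\big(\sum_{k=0}^{n-1}\bar D_k \Sigma_w \bar D_k^\top + \bar L_k \Sigma_z \bar L_k^\top\big)C^\top + \Sigma_z$ by Theorem~1 (equation \eqref{ExpEq}), so the top block is $\preceq P_n^2\,\mathbb{I}$ with $P_n^2$ exactly as stated. For the bottom block, the almost-sure estimate $d_n d_n^\top \preceq \bar K_n^2\,\mathbb{I}$ of Theorem~1 lets me replace $\norm{d_n}^2$ by $\bar K_n^2$, and since $\mathbb{E}[(e_j-e'_j)(e_j-e'_j)^\top \mid e] = e_j e_j^\top + \Sigma_E$ has spectral norm at most $K_e^2 + \norm{\Sigma_E}$, the bottom block --- which is $\mathbb{E}[Q'_n \mid e]$ for $Q'_n = \norm{d_n}^2 (e_j-e'_j)(e_j-e'_j)^\top$ --- satisfies $\mathbb{E}[Q'_n \mid e] \preceq P^{'2}_n\,\mathbb{I}$ with $P^{'2}_n = \norm{\mathbb{E}[Q'_n\mid e]} \le \bar K_n^2(K_e^2 + \norm{\Sigma_E})$. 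A block-diagonal p.s.d.\ matrix is dominated by the maximum of its diagonal blocks times the identity, so combining the two bounds gives $\mathbb{E}[(H(e)-H(e'))^2 \mid e] \preceq \max\{P_n^2, P^{'2}_n\}\,\mathbb{I} = \bar P_n^2$.

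The substantive content is entirely in the first step --- recognizing that the excitation enters $H$ only through a term affine in the single block $e_j$, which is what reduces the bounded-difference bound to one summand; this uses the $e$-independence of the observer error crucially. Everything afterward is the linear-algebra identity for $\mathcal{D}(B)^2$ together with elementary second-moment identities for i.i.d.\ vectors on a bounded support. The one point requiring care --- and the only place I expect friction --- is the conditioning in the expectation step: the conditional expectation is with respect to $e$ alone, so the noise, the fresh copy $e'_j$, and (if one keeps it) the coordinate choice must all be integrated out, and it is precisely the independence of $d_n$ from $e$ that licenses pulling the scalar weight out of the matrix factor so as to recover the stated $P_n^2$ rather than a cruder worst-case constant.
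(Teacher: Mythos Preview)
Your proposal is correct and follows essentially the same route as the paper's proof: both reduce $H(e)-H(e')$ to the dilation of the single rank-one term $d_n(e_{n-k'-1}-e'_{n-k'-1})^\top$, square via the block-diagonal identity for $\mathcal{D}(B)^2$, and bound the two blocks separately using the independence of $d_n$ from the excitation together with the moment formula \eqref{ExpEq} and the almost-sure bound $\|d_n\|\le \bar K_n$. Your write-up is somewhat cleaner in making the dilation linearity and the conditioning explicit, but the argument is the same.
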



\begin{proof}
Let $q_n = d_n e^{\top}_{n-k'-1} - d_ne^{'\top}_{n-k'-1}$ and observe that
\begin{multline}
 \mathbb{E}[(H(e) - H(e'))^2 | e] =  \mathbb{E}\bigg[\begin{bmatrix} 0 & q_n \\ q_n^\top & 0 \end{bmatrix}^2 | e\bigg] = \\
 \mathbb{E}\bigg[\begin{bmatrix} Q_{n} & 0 \\ 0 & Q'_{n} \end{bmatrix} | e\bigg]
\end{multline}
where we have defined
\begin{equation}
\begin{aligned}
    Q_{n} = d_n e^{\top}_{n-k'-1} e_{n-k'-1}d^{\top}_{n} + d_ne^{'\top}_{n-k'-1} e^{'}_{n-k'-1}d^{\top}_{n}\\
    Q'_{n} = e_{n-k'-1}d^{\top}_n  d_{n}e^{\top}_{n-k'-1} + e^{'}_{n-k'-1}d^{\top}_n  d_{n}e^{'\top}_{n-k'-1}
\end{aligned}
\end{equation}
Now we have
\begin{multline}
\mathbb{E}[Q_n | e] =\\
\mathbb{E}[d_n e^{\top}_{n-k'-1} e_{n-k'-1}d^{\top}_{n} + d_ne^{'\top}_{n-k'-1} e^{'}_{n-k'-1}d^{\top}_{n} | e] =\\
(e^{\top}_{n-k'-1} e_{n-k'-1}) \mathbb{E}[d_{n}d^{\top}_n | e] + \\
\mathbb{E}[e^{'\top}_{n-k'-1} e^{'}_{n-k'-1} | e] \mathbb{E}[d_{n}d^{\top}_n | e]
\end{multline}
Recalling that $\norm{e_{k}} \leq K_{e}~\forall k \geq 0$ and (\ref{ExpEq}), it follows that
\begin{multline}
\norm{\mathbb{E}[Q_n | e]} \leq \left(K^{2}_e + \text{tr}(\Sigma_{E}\right)\times\\
\norm{C\left(\sum_{k=1}^{n-1} \bar{D}_{k}\Sigma_{w}\bar{D}^{\top}_{k} + \bar{L}_{k}\Sigma_{z}\bar{L}^{\top}_{k}\right) C^{\top} + \Sigma_{z}}= P^{2}_n.
\end{multline}
Moreover, it follows that
\begin{multline}
\norm{\mathbb{E}[Q'_n | e]} = \\
e_{n-k'-1}d^{\top}_n  d_{n}e^{\top}_{n-k'-1} + e^{'}_{n-k'-1}d^{\top}_n  d_{n}e^{'\top}_{n-k'-1}\\
=(\mathbb{E}[(d^{\top}_{n} d_{n}) | e]) e_{n-k'-1}e^{\top}_{n-k'-1} + \\ 
 (\mathbb{E}[d^{\top}_{n} d_{n}| e]) \mathbb{E}[ e^{'}_{n-k'-1}e^{'\top}_{n-k'-1} | e]
\end{multline}
So we get
\begin{equation}
\norm{\mathbb{E}[Q'_n | e]} \leq \bar{K}^{2}_{n}(K^{2}_e + \norm{\Sigma_{E}}) = P^{'2}_{n}
\end{equation}
Hence it follows that
\begin{equation}
\begin{aligned}
\norm{\mathbb{E}[(H(e) - H(e'))^2 | e]} \leq \max\{P^{2}_n,P^{'2}_n\}
\end{aligned}
\end{equation}
So it follows that
\begin{equation}
\begin{aligned}
 \mathbb{E}[(H(e) - H(e'))^2 | e] \preceq \bar{P}^{2}_{n}
\end{aligned}
\end{equation}
where $\bar{P}^{2}_{n} = \max\{P^{2}_n,P^{'2}_n\}\mathbb{I}$.
\end{proof}
Now we are ready to provide our theorem.
\begin{theorem}
\label{thm:phi2n}
If $v_n\equiv 0$ for all $n\geq 0$, then the following concentration inequality holds for all $N\geq 1$ and all $t$:
\begin{equation}
\mathrm{P} \bigg( \norm{\Phi^{(2)}_{N}} \geq t\Bigg) \leq (m+p) \cdot e^{-N^{2}t^2 / c^{(2)}_{N}}
\end{equation}
where $c^{(2)}_{N} = \norm{\sum_{k=0}^{N-1} \bar{P}^{2}_{k} }$ for $\bar{P}^{2}_{k} = \max\{P^{2}_k,P^{'2}_k\}\mathbb{I}$, where
\begin{equation}
\begin{aligned}
P^{2}_{k} &= \left(K^{2}_e + \textup{tr}(\Sigma_{E}\right)\times \\
&\quad\quad\norm{C\left(\sum_{k=1}^{n-1} \bar{D}_{k}\Sigma_{w}\bar{D}^{\top}_{k} + \bar{L}_{k}\Sigma_{z}\bar{L}^{\top}_{k}\right) C^{\top} + \Sigma_{z}} \\
P'^{2}_{k} &= \bar{K}^{2}_{n}(K^{2}_e + \norm{\Sigma_{E}}).
\end{aligned}
\end{equation}
\end{theorem}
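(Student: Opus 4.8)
The plan is to reduce the rectangular matrix $\Phi^{(2)}_{N}$ to a Hermitian one via its symmetric dilation and then invoke the matrix McDiarmid inequality of Sect.~\ref{sec2}, with the bounded-difference ingredient supplied by Proposition~\ref{prop:owh}; in contrast to the proof of Theorem~\ref{thm:phi1n}, the i.i.d.\ excitation vectors here already provide the exchangeable structure required, so the inequality can be applied directly without a symmetrization step. First I would record that, under $v_n\equiv 0$, one has $C\hat{x}_n-y_n=C\delta_n-z_n=:d_n$, so that $N\Phi^{(2)}_{N}=\sum_{n=0}^{N-1}d_n e_{n-k'-1}^{\top}$ and $\mathcal{D}\!\left(N\Phi^{(2)}_{N}\right)=H(e)$, the function appearing in Proposition~\ref{prop:owh}. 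Since $\lambda_{\max}(\mathcal{D}(B))=\|B\|$, it suffices to bound the tail of $\lambda_{\max}(H(e))$ and then rescale by $1/N$. Moreover $\mathbb{E}[H(e)]=0$: from (\ref{deltahat}) each $d_n$ is a linear function of the noise $w_{0:n-1},z_{0:n-1}$ only, hence independent of every excitation vector $e_k$, while $\mathbb{E}[e_{n-k'-1}]=0$.

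Next I would verify the two hypotheses of the matrix McDiarmid inequality, taking the excitation vectors $e_{-k'-1},\dots,e_{N-2-k'}$ (re-indexed as $Z_1,\dots,Z_N$, with $e_j\equiv 0$ for $j<0$) as the resampled coordinates. For the self-reproducing property, $H$ is linear in each $e_{n-k'-1}$ and each such vector occurs in exactly one summand of $H(e)$; replacing $e_j$ by a fresh conditionally i.i.d.\ copy and taking the conditional expectation annihilates that summand, because $d_n$ is independent of $e_j$ and $\mathbb{E}[e_j]=0$, so
\[
\sum_{j}\Bigl(H(e)-\mathbb{E}\bigl[H(e)\mid (e_i)_{i\neq j}\bigr]\Bigr)=H(e)=H(e)-\mathbb{E}[H(e)],
\]
i.e.\ the identity holds with scale parameter $s=1$. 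The bounded-difference estimate $\mathbb{E}\bigl[(H(e)-H(\dots,e_k',\dots))^{2}\mid e\bigr]\preceq\bar{P}_k^{2}$, with $\bar{P}_k^{2}=\max\{P_k^{2},P_k'^{2}\}\mathbb{I}$ and $P_k^{2},P_k'^{2}$ exactly the quantities displayed in the statement, is precisely the content of Proposition~\ref{prop:owh}. Substituting $s=1$ and $L=\bigl\|\sum_{k=0}^{N-1}\bar{P}_k^{2}\bigr\|=c^{(2)}_{N}$ into the McDiarmid inequality yields $\mathrm{P}\{\lambda_{\max}(H(e))\ge r\}\le (m+p)\,e^{-r^{2}/c^{(2)}_{N}}$; setting $r=Nt$ and using $\|\Phi^{(2)}_{N}\|=\lambda_{\max}(H(e))/N$ then gives the claimed bound.

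The step I expect to be the main obstacle is reconciling the hypotheses of the McDiarmid inequality with the fact that $H(e)$ depends not only on the resampled excitation vectors but also on the temporally correlated noise carried inside the $d_n$, whereas the inequality as stated is for a function of the resampled coordinates alone. The resolution is to treat the noise as auxiliary randomness: the self-reproducing identity above is unaffected by marginalizing the noise (the conditional expectation over $e_j$ removes the relevant summand for every noise realization), and Proposition~\ref{prop:owh} has been set up so that its right-hand side $\bar{P}_k^{2}$ is deterministic, so the increments of the Doob martingale of $H(e)$ along the excitation filtration still obey a uniform bound and the estimate survives taking expectation over the noise. I would also make sure the index bookkeeping between the summation variable $n$ and the excitation index $n-k'-1$ is consistent (boundary terms for $n\le k'$ do not change $s=1$), after which the explicit forms of $P_k^{2}$ and $P_k'^{2}$ follow verbatim from Proposition~\ref{prop:owh} with no further work.
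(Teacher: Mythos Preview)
Your proposal is correct and follows essentially the same approach as the paper: form the symmetric dilation $H(e)=\mathcal{D}(N\Phi^{(2)}_{N})$, verify the self-reproducing property with $s=1$ using the linearity of $H$ in each $e_{n-k'-1}$ together with $\mathbb{E}[H(e)]=0$, import the bounded-difference bound $\bar{P}_k^{2}$ from Proposition~\ref{prop:owh}, and apply the matrix McDiarmid inequality on $\mathcal{H}^{m+p}$. Your additional remarks about treating the noise inside $d_n$ as auxiliary randomness and about index bookkeeping are more explicit than the paper's own argument but do not depart from it.
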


\begin{proof}
We wish to provide bounds on the operator norm of
\begin{equation}
 \Phi^{(2)}_{N} =\frac{1}{N}\sum_{n=0}^{N-1} (C\delta_n - z_n)e^{\top}_{n-k'-1}
\end{equation}
To achieve that, we will use the concept of matrix Stein pairs as defined previously. Let $E = (e_1,...,e_k,...,e_n)$ be a sequence of random vectors taking values in a space $\mathcal{Z}$. Now construct an exchangeable pair $E' = (e_1,...,e'_k, ..., e_n)$ where $e_k$ and $e'_k$ are conditionally i.i.d. given $(e_j)_{j \neq k} $ and $k$ is an independent coordinate drawn uniformly from $\{1, ..., n\}$. We define $H(e)$ as in Proposition \ref{prop:owh}:
\begin{equation}
H(e) = \begin{bmatrix} 0 & \sum_{n=0}^{N-1} b_ne^{\top}_{n-k'-1} \\ \sum_{n=0}^{N-1} e_{n-k'-1}b_n^{\top} & 0 \end{bmatrix}
\end{equation}
where $b_n = C\delta_n - z_n$. Since $\mathbb{E}(H(e)) = 0$, this means $H(e)$ satisfies the self-reproducing property
\begin{equation}
\sum_{n=1}^{N} H(e) - \mathbb{E}[H(e) | (e_j)_{j\neq (n-k'-1)} ] = H(e)
\end{equation}
for the choice of parameter $s = 1$ (see (\ref{eqn:onewiths}) for the definition of $s$), since for all $n \in \{1,...,N\}$ we have
\begin{multline}
H(e) - \mathbb{E}[H(e) | (e_j)_{j\neq (n-k'-1)} ] =\\
\begin{bmatrix} 0 & (C\delta_n - z_n)e^{\top}_{n-k'-1} \\ e_{n-k'-1}(C\delta_n - z_n)^{\top} & 0 \end{bmatrix}
\end{multline}
Next, we use Proposition \ref{prop:owh} to state that $H(e)$ also satisfies the bounded differences property. So we have
\begin{equation}
 \mathbb{E}[(H(e) - H(e')^2 | e] \preceq \bar{P}^{2}_{n}
\end{equation}
for $\bar{P}^{2}_{n} = \max\{P^{2}_n,P^{'2}_n\}\mathbb{I}$. Hence, we apply the McDiarmid inequality to the dilation $H(e) \in \mathcal{H}^{m+p}$ to obtain
\begin{multline}
 \mathrm{P}\Bigg(\norm{\frac{1}{N}H(e)} \geq t\Bigg) = \\
 \mathrm{P}\Bigg(\norm{\frac{1}{N}\sum_{n=0}^{N-1} (C\delta_n - z_n)e^{\top}_{n-k'-1}} \geq t\Bigg) \leq\\
 (m+p) \cdot e^{-N^{2}t^2 / L}
\end{multline}
for $L = \norm{ \sum_{k=0}^{N-1} \bar{P}^{2}_{k} }$.
\end{proof}
Now we focus on bounding the last submatrix  $\Phi^{(3)}_{N}$.
\begin{theorem}
The following concentration inequality holds for all $N\geq 1$ and all $t$:
\begin{equation}
\mathrm{P} \bigg( \norm{\Phi^{(3)}_{N}} \geq t\Bigg) \leq 2q \cdot e^{-N^{2}t^2 / c^{(3)}_{N}}
\end{equation}
where $c^{(3)}_{N} = \norm{\sum_{k=0}^{N-1} (\bar{K}^{2}_{e}\mathbb{I} - \Sigma_{e})^{2} + \mathbb{E}[(e_n e^{\top}_{n})^{4}] - \Sigma^{2}_{e}}$.
\end{theorem}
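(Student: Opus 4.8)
\emph{Proof plan.} Set $Y_n = e_{n-k'-1}e_{n-k'-1}^{\top} - \Sigma_e$ for $n=0,\dots,N-1$, so that $\Phi^{(3)}_N = \tfrac{1}{N}\sum_{n=0}^{N-1}Y_n$. Since $k'$ is a fixed shift and the private excitation vectors are i.i.d.\ with mean zero and covariance $\Sigma_e$, the $Y_n$ are i.i.d.\ Hermitian matrices in $\mathcal{H}^q$ with $\mathbb{E}[Y_n]=\mathbb{E}[e_n e_n^{\top}]-\Sigma_e=0$. Two features make this the most benign of the three blocks: it involves neither $\delta_n$ nor $y_n$, only the watermark, so there is nothing for the attacker to corrupt (hence the absence of a $v_n\equiv 0$ hypothesis), and the summands are \emph{genuinely} independent rather than merely conditionally independent. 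Consequently no symmetrization step is needed, and I would invoke the matrix Hoeffding inequality for conditionally independent sums (Proposition~1) directly, taking the auxiliary matrix $Z$ to be a constant so that conditional independence given $Z$ reduces to ordinary independence.

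The plan is then to check the three hypotheses of Proposition~1 for $(Y_n)$ and read off the variance proxy. The mean-zero condition $\mathbb{E}[Y_n\mid Z]=0$ is immediate. For the almost-sure square bound I would use that $e_n e_n^{\top}$ is rank one with largest eigenvalue $\norm{e_n}^{2}\le\bar{K}_e^{2}$, hence $0\preceq e_n e_n^{\top}\preceq\bar{K}_e^{2}\mathbb{I}$ and $-\Sigma_e\preceq Y_n\preceq\bar{K}_e^{2}\mathbb{I}-\Sigma_e$; combined with $\Sigma_e\succeq 0$ this gives the deterministic bound $Y_n^{2}\preceq(\bar{K}_e^{2}\mathbb{I}-\Sigma_e)^{2}=:P_n^{2}$. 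For the conditional-variance bound, independence collapses $\mathbb{E}[Y_n^{2}\mid(Y_j)_{j\neq n}]$ to $\mathbb{E}[Y_n^{2}]$; expanding the square and using $(e_ne_n^{\top})^{2}=\norm{e_n}^{2}e_ne_n^{\top}$, this is dominated by the matrix moment $\mathbb{E}[(e_ne_n^{\top})^{4}]-\Sigma_e^{2}=:Q_n^{2}$ that enters $c^{(3)}_N$ (and here $P_k^{2}$, $Q_k^{2}$ are in fact independent of $k$).

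With $\sigma^{2}=\tfrac{1}{2}\norm{\sum_{k=0}^{N-1}(P_k^{2}+Q_k^{2})}$, Proposition~1 gives $\mathrm{P}(\lambda_{\max}(\sum_{n}Y_n)\ge u)\le q\,e^{-u^{2}/(2\sigma^{2})}$. Applying the same bound to $\sum_{n}(-Y_n)$, which satisfies identical hypotheses because $(-Y_n)^{2}=Y_n^{2}$, and union-bounding controls $\norm{\sum_{n}Y_n}=\max\{\lambda_{\max}(\sum_{n}Y_n),\,\lambda_{\max}(-\sum_{n}Y_n)\}$; this is where the prefactor $2q$ arises, $q$ being the dimension of the excitation. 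Finally, since $\{\norm{\Phi^{(3)}_N}\ge t\}=\{\norm{\sum_{n}Y_n}\ge Nt\}$, substituting $u=Nt$ inserts the $N^{2}$ into the exponent, and $2\sigma^{2}=c^{(3)}_N$ by construction, which is the asserted inequality.

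I expect the one delicate point to be the deterministic square bound $Y_n^{2}\preceq(\bar{K}_e^{2}\mathbb{I}-\Sigma_e)^{2}$: a two-sided operator bound $-M_1\preceq Y\preceq M_2$ does not in general bound $Y^{2}$, so one must use the structure here --- e.g.\ that $2\Sigma_e\preceq\bar{K}_e^{2}\mathbb{I}$, which holds whenever the watermark covariance is non-degenerate relative to its almost-sure magnitude bound --- to first symmetrize the lower bound to $-(\bar{K}_e^{2}\mathbb{I}-\Sigma_e)\preceq Y_n\preceq\bar{K}_e^{2}\mathbb{I}-\Sigma_e$ and only then square. Everything else is a direct specialization of the independent-summand matrix Hoeffding bound, with no inter-temporal dependence to handle --- in contrast to the proofs of Theorems~\ref{thm:phi1n} and~\ref{thm:phi2n}.
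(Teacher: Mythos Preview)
Your proposal is correct and follows essentially the same route as the paper: define $\bar E_n = e_{n-k'-1}e_{n-k'-1}^{\top}-\Sigma_e$, note the summands are i.i.d.\ Hermitian with mean zero, and apply the matrix Hoeffding inequality (Proposition~1) with the deterministic square bound $\bar E_n^{2}\preceq(\bar K_e^{2}\mathbb{I}-\Sigma_e)^{2}$ and conditional-variance bound $\mathbb{E}[\bar E_n^{2}]=\mathbb{E}[(e_ne_n^{\top})^{4}]-\Sigma_e^{2}$. Your write-up is in fact more explicit than the paper's about why the prefactor is $2q$ (the union of the $\lambda_{\max}$ bounds for $\sum Y_n$ and $-\sum Y_n$) and about the delicacy of the step $Y_n^{2}\preceq(\bar K_e^{2}\mathbb{I}-\Sigma_e)^{2}$, which the paper simply asserts.
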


\begin{proof}
We wish to provide a bound on the norm of
\begin{equation}
\Phi^{(3)}_{N} =  \frac{1}{N} \sum_{n=0}^{N-1} (e_{n-k'-1}e^{\top}_{n-k'-1} - \Sigma_{e})
\end{equation}
Define $\bar{E}_n = e_{n-k'-1}e^{\top}_{n-k'-1} - \Sigma_{e}$. We apply the Hoeffding bound for the independent sum to obtain
\begin{equation}
\mathrm{P} \left( \norm{\frac{1}{N}\sum_{n=0}^{N-1} \bar{E}_n}  \geq t \right) \leq d \cdot e^{-N^{2}t^2 / 2\sigma^2}
\end{equation}
for $\sigma^2 = \frac{1}{2}\norm{\sum_{k=0}^{N-1} \left(\bar{K}^{2}_{e}\mathbb{I} - \Sigma_{e}\right)^{2} + \mathbb{E}[(e_n e^{\top}_{n})^{4}] - \Sigma^{2}_{e}}$, since
\begin{equation}
\bar{E}_n^{2} \preceq \sum_{k=0}^{N-1} \left(\bar{K}^{2}_{e}\mathbb{I} - \Sigma_{e}\right)^{2}
\end{equation}
and by the definition of expectation we have that $\mathbb{E}\left[\bar{E}_n^{2}\right] = \mathbb{E}[(e_{n-k'-1} e^{\top}_{n-k'-1})^{4}] - \Sigma^{2}_{e}$.
\end{proof}

\section{Finite Sample Tests for General LTI Systems}

\label{Sec5}

In this section, we provide our finite sample tests based on dynamic watermarking for general LTI Systems with switching. In the previous section, we obtained concentration inequalities for each of the submatrices of $\Phi_N$ (\ref{eqn:defphin}). Note $\Phi_3$ is the private excitation matrix we get to design, and so it is in our power to choose the dynamic watermark to display a desired concentration behavior.

We are now ready to state the main theorem of this work, which basically characterizes the behavior of a switching rule based on the finite-time concentration inequalities. Our switching rule is constructed by thresholding the block submatrices $\Phi^{(1)}_N$ and $\Phi^{(2)}_N$ using the measurements of the second observer ((\ref{xhat}) and (\ref{deltahat})) and applying the switch on the first observer once those thresholds are violated, and the we switch back on violations disappear. Let $S$ be a positive constant such that $\max\{c^{(1)}_{N},c^{(2)}_{N},c^{(3)}_{N}\} \leq NS$; such an $S$ exists when $(A+BK)$ and $(A+L_nC_n)$ are Schur stable provided that the switching rule satisfies the condition specified in Proposition \ref{prop:swr}.
\begin{theorem}
\label{thm:main}
Recall the closed-loop MIMO LTI system (\ref{LTI-SYS}) with $\alpha_{n}$ being our switching control action that chooses between two different observation matrices. Define the threshold $t_{N} = \sqrt{(1+\rho)S\log N / N}$, where $\rho > 0$. Let $\Phi^{(1)}_{N}$ and $\Phi^{(2)}_{N}$ be defined using the measurements from Eq. \ref{xhat} and Eq. \ref{deltahat}. Let $\alpha_{N}$ be the switching decision rule with
\begin{itemize}
\item we choose the switching input $\alpha_N = 0$ when we have $\norm{\Phi^{(1)}_{N}} <  t_{N}$ or $\norm{\Phi^{(2)}_{N}} < t_{N}$
\item we switch from $\alpha_{N-1} = 0$ to $\alpha_N = 1$ when $\alpha_{N-i} = 0$ for $i\in\{1,\ldots,\tau\}$ and $\norm{\Phi^{(1)}_{N}} \geq  t_{N}$ and $\norm{\Phi^{(2)}_{N}} \geq t_{N}$.
\end{itemize}
Moreover, let $E_{N}$ for all $n \geq 1$ denote the event
\begin{equation}
E_{N} = \Bigg[ \norm{\Phi^{(1)}_{N}} >  t_{N} \bigcup \norm{\Phi^{(2)}_{N}} > t_{N}\Bigg]
\end{equation}
Then if $v_N\equiv 0$ for all $N\geq 0$, we have that
\begin{equation}
\mathrm{P}(\limsup_{N \rightarrow \infty}E_{N} ) = 0.
\end{equation} 
That is, under no attacks our switching rule incorrectly switches the system only a finite number of times.
\end{theorem}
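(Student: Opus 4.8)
The plan is to bound $\mathrm{P}(E_N)$ by a summable sequence and then invoke the first Borel--Cantelli lemma. First I would recall that, by the hypothesis stated just before the theorem, there is a constant $S$ with $\max\{c^{(1)}_N,c^{(2)}_N,c^{(3)}_N\}\leq NS$; this holds because $A+BK$ and each $A+L_nC_n$ are Schur stable and the dwell-time condition of Proposition~\ref{prop:swr} keeps the closed loop stable, so the powers $(A+L_1C_1)^j$ decay geometrically, the quantities $\bar K_n$ and the constants $P_n,P'_n$ entering $c^{(1)}_N$ and $c^{(2)}_N$ stay bounded uniformly in $n$, and hence each $c^{(i)}_N$ grows at most linearly in $N$. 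Only $\Phi^{(1)}_N$ and $\Phi^{(2)}_N$ enter the switching rule and the event $E_N$, so the block $\Phi^{(3)}_N$ is not needed here.

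Next I would substitute the threshold $t=t_N=\sqrt{(1+\rho)S\log N/N}$ into Theorems~\ref{thm:phi1n} and~\ref{thm:phi2n}. Since $c^{(i)}_N\leq NS$,
\[
\frac{N^2 t_N^2}{c^{(i)}_N}\ \geq\ \frac{N^2 t_N^2}{NS}\ =\ \frac{N t_N^2}{S}\ =\ (1+\rho)\log N,
\]
so Theorem~\ref{thm:phi1n} gives $\mathrm{P}(\norm{\Phi^{(1)}_N}\geq t_N)\leq m\,N^{-(1+\rho)}$ and Theorem~\ref{thm:phi2n} gives $\mathrm{P}(\norm{\Phi^{(2)}_N}\geq t_N)\leq (m+p)\,N^{-(1+\rho)}$. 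A union bound over the two events defining $E_N$ then yields
\[
\mathrm{P}(E_N)\ \leq\ \mathrm{P}(\norm{\Phi^{(1)}_N}> t_N)+\mathrm{P}(\norm{\Phi^{(2)}_N}> t_N)\ \leq\ (2m+p)\,N^{-(1+\rho)}.
\]
Because $\rho>0$ we have $\sum_{N\geq1}\mathrm{P}(E_N)\leq (2m+p)\sum_{N\geq1}N^{-(1+\rho)}<\infty$, and the first Borel--Cantelli lemma gives $\mathrm{P}(\limsup_{N}E_N)=0$.

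It then remains to read this probabilistic statement as a statement about the policy: on the probability-one complement of $\limsup_N E_N$ there is an almost surely finite $N_0(\omega)$ after which neither $\norm{\Phi^{(1)}_N}$ nor $\norm{\Phi^{(2)}_N}$ ever exceeds $t_N$; since a switch away from the no-attack mode can be commanded only through such a threshold violation — and the non-strict event $\{\norm{\Phi^{(i)}_N}\geq t_N\}$ is controlled by the same tail bound $(m+p)N^{-(1+\rho)}$, so the $>$ versus $\geq$ mismatch between $E_N$ and the switching condition costs nothing — the policy makes at most $N_0(\omega)$ incorrect switches, which is the asserted finiteness.

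The genuinely hard part of this result is not the Borel--Cantelli argument above but the uniform linear-in-$N$ bound $\max_i c^{(i)}_N\leq NS$: one must check that the geometric decay coming from Schur stability of $A+L_1C_1$ and $A+BK$, together with the dwell-time constraint of Proposition~\ref{prop:swr} that keeps the switched closed loop stable, indeed forces the concentration constants $c^{(i)}_N$ accumulated in Section~\ref{Sec4} to grow only linearly; the transfer of the almost-sure $\limsup$ conclusion to the trajectory-level count of switches is a secondary bookkeeping point, and everything else is the elementary chain of inequalities displayed above.
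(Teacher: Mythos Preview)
Your proposal is correct and follows essentially the same route as the paper: invoke the concentration bounds of Theorems~\ref{thm:phi1n} and~\ref{thm:phi2n}, use $c^{(i)}_N\leq NS$ together with the choice $t_N=\sqrt{(1+\rho)S\log N/N}$ to get tail probabilities of order $N^{-(1+\rho)}$, take a union bound, and apply Borel--Cantelli. The only cosmetic differences are that the paper bounds the sum by the integral $\int_1^\infty k^{-(1+\rho)}\,dk$ and applies Borel--Cantelli separately to each $E_N^{(j)}$ before recombining, whereas you (equivalently, and slightly more directly) apply it once after the union bound; your added remarks on the $>$ versus $\geq$ mismatch and on translating the $\limsup$ conclusion into a finite switch count are not in the paper's proof but are harmless clarifications.
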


\begin{proof}
Recall that we previously proved the following matrix concentration inequalities for each submatrix:
\begin{align}
\mathrm{P} \bigg( \norm{\Phi^{(1)}_{N}} \geq t_{N}\Bigg) &\leq m \cdot e^{-N^{2}t_N^2 / c^{(1)}_{N}}\\
\mathrm{P}\Bigg(\norm{\Phi^{(2)}_{N}} \geq t_{N}\Bigg) &\leq (m+p) \cdot e^{-N^{2}t_N^2 / c^{(2)}_{N}}
\end{align}
for the constants $c^{(1)}_{N}$ and $c^{(2)}_{N}$. Summing over all $N$, we have
\begin{equation}
\sum_{k=1}^{\infty} \mathrm{P} \bigg( \norm{\Phi^{(j)}_{k}} \geq t_{k}\Bigg) \leq (m+p) \int_1^\infty \frac{1}{k^{1+\rho}}dk < \infty.
\end{equation}
Hence the Borel-Cantelli Lemma implies that for the event
\begin{equation}
E^{(j)}_{N} = \Bigg[ \norm{\Phi^{(j)}_{N}} \geq t_{N} \Bigg]
\end{equation} 
we have
\begin{equation}
\mathrm{P}(\limsup_{N \rightarrow \infty} E^{(j)}_{N} ) = 0 , ~\forall j = \{1,2,3\}.
\end{equation} 
Now, if we define the event
\begin{equation}
E_{N} = \Bigg[ \norm{\Phi^{(1)}_{n}} >  t_{n} \bigcup \norm{\Phi^{(2)}_{n}} > t_{n}\Bigg] , ~N \geq 1,
\end{equation} 
then it follows that
\begin{equation}
\mathrm{P}(E_{N}) \leq \mathrm{P}\left(\bigcup_{j=1}^{2} E^{(j)}_{N}\right) \leq \sum_{j=1}^{2} \mathrm{P}\left(E^{(j)}_{N}\right) , ~N \geq 1.
\end{equation}
So summing once more for all $N$ gives
\begin{multline}
\sum_{k=1}^{\infty} \mathrm{P}(E_{k}) \leq \sum_{k=1}^{\infty}\sum_{j=1}^{2} \mathrm{P}\left(E^{(j)}_{N}\right) < \\
2(m+p) \int_1^\infty \frac{1}{k^{1+\rho}}dk < \infty.
\end{multline}
We obtain by applying Borel-Cantelli lemma that
\begin{equation}
\mathrm{P}\left(\limsup_{N \rightarrow \infty}E_{N} \right) = 0,
\end{equation} 
which is the desired result.
\end{proof}

The result of this theorem implies that if there is no attack to the system, the operator norm of the matrices involved can have ``large'' deviations only a finite number of times, hence we obtain that a switching rule based on tests derived from the concentration inequalities defined previously will only trigger attack alerts only a finite number of times. In addition, we note that the having a second observer to compute the finite tests is the key to ensure that the concentration inequalities are consistent with the obtained measurements. While the first observer measurements with switching plays the role in the control synthesis. Lastly, we observe that we do not need to enforce the test on $\norm{\Phi^{(3)}_{3}}$ since this submatrix is only composed of the watermaking signal, and the attacks do not have the power to affect the watermarking imposed by the controller.

\section{Attack Magnitude Thresholding}

\label{Sec6}

The previous section gives a finite sample test that works properly when there is no attack. 
Our goal here is to determine the trade-off between our test's statistical power and the attack magnitude. 
Namely, we are interested in how the right-hand side of our finite sample tests are related to the magnitude of the attack vectors. To do so, we consider the first observation matrix under an attack $y_{n} = Cx_{n} + z_n + v_n$, where $v_n$ is an additive attack. We first consider attacks that are small perturbations and then consider more complex attacks of the form explored in \cite{hespanhol2017dynamic} . 
Note we have again omitted the subscript of the observation matrix for clarity. Namely, in the next two subsections we will consider two attack forms. 

\subsection{Perturbation Attacks}

The first attack we analyze is when $v_n$ consists of a small perturbation that could be determinstic and/or stochastic. To begin our analysis, let $\bar{\delta}_n$ be the measurement error when the system is under attack, and observe that
\begin{equation}
\bar{\delta}_{n+1} = (A+LC)\bar{\delta}_{n} - Dw_n -L^{\top}z_{n} -L^{\top}v_{n}. 
\end{equation}
Expanding this expression gives that
\begin{equation}
\bar{\delta}_{n} = (A + LC)^{n}\bar{\delta}_{0} - \sum_{k=0}^{n-1} (A+LC)^{n-1-k}(\mathbb{I} w_k - L^{\top}z_{k} - L^{\top}v_{k})
\end{equation}
where $\bar{\delta}_{0} = \delta_{0} = 0$. So we can rewrite the above as
\begin{equation}
\bar{\delta}_{n} = \sum_{k=0}^{n-1} \bar{D}_{k}w_k + \bar{L}_{k}z_k  + \bar{L}_{k}v_k = \delta_{n} + \sum_{k=0}^{n-1} \bar{L}_{k}v_k.
\end{equation}
Next we define $V_{n} = C\sum_{k=0}^{n-1} \bar{L}_{k}v_k - v_n$, and observe that the quantity $V_n$ is determined by the attacker since it depends upon the values of $v_k$. Qualitatively, we note that the magnitude of $V_n$ is related to the attack magnitude, since if there is no attack then $V_n\equiv 0$ for all $n$.

\begin{theorem}
\label{thm:fam}
Consider the closed-loop MIMO LTI system (\ref{LTI-SYS}) with $\alpha_{n},t_{N},E_n$ as defined in Theorem \ref{thm:main}, and suppose the attacker chooses the perturbation attack described above. If the attack values $v_k$ are such that there exists a positive constant $G$ with
\begin{equation}
\label{eqn:ram}
\frac{1}{N}\sum_{k=0}^{N-1} \norm{V_{k}} \leq \frac{G}{N}.
\end{equation}
then we have that $\mathrm{P}(\limsup_{n \rightarrow \infty}E_{n} ) = 0$. That is, under a perturbation attack with the above specifications the attack is detected only a finite number of times.
\end{theorem}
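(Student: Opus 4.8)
The plan is to reduce this to the no-attack result (Theorem \ref{thm:main}) by showing that the perturbation attack only shifts each test matrix by an additive term whose operator norm is $O(1/N)$, which is negligible next to the threshold $t_N=\sqrt{(1+\rho)S\log N/N}$. First I would record the effect of the attack on the residual built from the second observer. Using the identity derived just above the theorem, $\bar\delta_n=\delta_n+\sum_{k=0}^{n-1}\bar L_k v_k$, the quantity feeding the tests becomes $C\hat x_n-y_n=C\bar\delta_n-z_n-v_n=d_n+V_n$, where $d_n=C\delta_n-z_n$ is the no-attack residual analyzed in Theorems \ref{thm:phi1n}--\ref{thm:phi2n} and $V_n=C\sum_{k=0}^{n-1}\bar L_kv_k-v_n$. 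Hence the test matrices under attack split as $\Phi^{(1)}_N=\Phi^{(1),0}_N+\Delta^{(1)}_N$ and $\Phi^{(2)}_N=\Phi^{(2),0}_N+\Delta^{(2)}_N$, with $\Phi^{(j),0}_N$ the no-attack matrices and
\[
\Delta^{(1)}_N=\frac1N\sum_{n=0}^{N-1}\bigl(d_nV_n^\top+V_nd_n^\top+V_nV_n^\top\bigr),\qquad
\Delta^{(2)}_N=\frac1N\sum_{n=0}^{N-1}V_ne_{n-k'-1}^\top .
\]

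Second, I would bound these perturbations. Since $A+LC$ is Schur stable, $\norm{(A+LC)^j}$ decays geometrically, so the sequence $\bar K_n$ of Theorem 1 is increasing and convergent; let $\bar K=\sup_n\bar K_n<\infty$, so that $\norm{d_n}\le\bar K$, and recall $\norm{e_{n-k'-1}}\le K_e$. Hypothesis (\ref{eqn:ram}) is equivalent to $\sum_{k=0}^{N-1}\norm{V_k}\le G$ for every $N$, hence $\sum_{k\ge0}\norm{V_k}\le G$ and in particular $\norm{V_n}\le G$. Therefore $\norm{\Delta^{(1)}_N}\le\frac1N(2\bar K+G)\sum_{n}\norm{V_n}\le \frac{(2\bar K+G)G}{N}$ and $\norm{\Delta^{(2)}_N}\le\frac{K_eG}{N}$; set $C_\ast=\max\{(2\bar K+G)G,\;K_eG\}$, so $\norm{\Delta^{(j)}_N}\le C_\ast/N$ for $j=1,2$.

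Third, I would run the Borel--Cantelli argument as in Theorem \ref{thm:main}. Because $t_N\sim\sqrt{\log N/N}$ while $C_\ast/N=o(t_N)$, for all large $N$ we have $t_N-C_\ast/N>0$ and, by the triangle inequality, $\{\norm{\Phi^{(j)}_N}>t_N\}\subseteq\{\norm{\Phi^{(j),0}_N}>t_N-C_\ast/N\}$. Applying Theorems \ref{thm:phi1n} and \ref{thm:phi2n} with threshold $t_N-C_\ast/N$, using $c^{(j)}_N\le NS$, and expanding
\[
N^2\bigl(t_N-\tfrac{C_\ast}{N}\bigr)^2=(1+\rho)SN\log N-2C_\ast\sqrt{(1+\rho)S}\,\sqrt{N\log N}+C_\ast^2 ,
\]
one sees that the exponent $N^2(t_N-C_\ast/N)^2/c^{(j)}_N$ exceeds $(1+\rho)\log N-1$ (equivalently $(1+\rho')\log N$ for any fixed $\rho'\in(0,\rho)$) once $N$ is large. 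Thus $\mathrm{P}\bigl(\norm{\Phi^{(j),0}_N}>t_N-C_\ast/N\bigr)\le (m+p)\,e\,N^{-(1+\rho)}$, and since $\mathrm{P}(E_N)\le\sum_{j=1,2}\mathrm{P}(\norm{\Phi^{(j)}_N}>t_N)$, the series $\sum_N\mathrm{P}(E_N)$ converges; Borel--Cantelli then gives $\mathrm{P}(\limsup_{N\to\infty}E_N)=0$.

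The main obstacle is the third step: one must verify that subtracting the $O(1/N)$ term from the threshold does not destroy the summability driving Borel--Cantelli. This works precisely because $t_N$ decays like $\sqrt{\log N/N}$, strictly slower than $1/N$, so the correction to the exponent $(1+\rho)\log N$ is only $O(\sqrt{N\log N}/(NS))=o(1)$. An attack whose induced perturbation $V_k$ decayed only as slowly as $t_N$ itself, rather than being summable, would not be covered — which is exactly why the hypothesis is phrased as the finite-total bound $\sum_k\norm{V_k}\le G$. The only preliminary point is the uniform boundedness $\sup_n\bar K_n<\infty$, which is immediate from the geometric decay of $(A+LC)^j$.
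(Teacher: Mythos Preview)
Your proposal is correct and follows essentially the same route as the paper: decompose the attacked test matrices into the no-attack part plus a perturbation built from the $V_n$'s, bound that perturbation in operator norm by $O(1/N)$ using the hypothesis $\sum_k\norm{V_k}\le G$ together with the uniform bound $\sup_n\bar K_n<\infty$ from Schur stability of $A+LC$, shift the threshold accordingly, and close with Borel--Cantelli via Theorems~\ref{thm:phi1n} and~\ref{thm:phi2n}. The paper uses the same decomposition (writing the cross term and quadratic term as $D_N$, $M_N$ for $\Phi^{(1)}$ and $H_N$ for $\Phi^{(2)}$) and arrives at the identical $O(1/N)$ bounds; if anything, you are more explicit than the paper in verifying that subtracting an $O(1/N)$ term from $t_N$ leaves the exponent at $(1+\rho)\log N$ up to an $o(1)$ correction, so summability is preserved.
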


\begin{proof}
We begin by considering
\begin{equation}
\begin{aligned}
\Phi^{(1)}_N = \frac{1}{N} \sum_{n=0}^{N-1}(C\hat{x}_n - y_n)(C\hat{x}_n - y_n)^{\top} - \\
\frac{1}{N}\sum_{n=0}^{N-1}\mathbb{E}[(C\hat{x}_n - y_n)(C\hat{x}_n - y_n)^{\top}] = \\
 \frac{1}{N}  \sum_{n=0}^{N-1}(C\bar{\delta}_{n} - z_n - v_n)(C\bar{\delta}_{n} - z_n - v_n)^{\top} - \\
 \frac{1}{N}\sum_{n=0}^{N-1}\mathbb{E}[(C\hat{x}_n - y_n)(C\hat{x}_n - y_n)^{\top}] = \\
 \frac{1}{N}  \sum_{n=0}^{N-1}(C\delta_n - z_n)(C\delta_n - z_n)^{\top} + D_{n} + D^{\top}_{n} + M_{n} - \\
 \frac{1}{N}\sum_{n=0}^{N-1}\mathbb{E}[(C\hat{x}_n - y_n)(C\hat{x}_n - y_n)^{\top}].
\end{aligned}
\end{equation}
where $\delta_n$ is the measurement error under no attack, and 
\begin{equation}
\begin{aligned}
D_n &= \frac{1}{N}  \sum_{n=0}^{N-1}(C\delta_n - z_n)V_n^{\top}\\
M_n &= \frac{1}{N}\sum_{n=0}^{N-1}V_n V_n^{\top}.
\end{aligned}
\end{equation}
Now using Theorem \ref{thm:phi1n}, we have that
\begin{multline}
\mathrm{P}(\norm{\Phi^{(1)}_{N}} \geq t_N) \leq \\
\mathrm{P}\bigg(\|\frac{1}{N}\sum_{n=0}^{N-1}(C\delta_n - z_n)(C\delta_n - z_n)^{\top} - \\
\frac{1}{N}\sum_{n=0}^{N-1}\mathbb{E}[(C\hat{x}_n - y_n)(C\hat{x}_n - y_n)^{\top}] \| \geq t_N + \\
-2\norm{{D}_{N}} - \norm{M_{N}}\bigg) \leq\\
  m\text{ } e^{\frac{-N^2\left(t_{N} - 2\norm{{D}_{N}} - \norm{M_{N}} \right)^2}{c^{(1)}_{N}}}.
\end{multline}
Next observe that
\begin{multline}
 2\norm{\bar{D}_{N}} + \norm{M_{N}} \leq \frac{2\bar{K}_N}{N}\sum_{n=0}^{N-1} \norm{V_{n}} + \frac{1}{N}\sum_{n=0}^{N-1} \norm{V_{n}}^2 \leq \\
\frac{2\bar{K}_NG}{N} + \frac{G^2}{N}.
\end{multline}
Since $(A+LC)$ is Schur stable, then from the definition of $\bar{K}_N$ we immediately get that there exists a positive constant $\bar{S}$ such that $\bar{K}_N \leq \bar{S}$ for all $N\geq 1$.  Combining this with the above implies that
\begin{equation}
\sum_{k=1}^{\infty} \mathrm{P}(\norm{\Phi^{(1)}_{k}} \geq t^{(1)}_k) < \infty,
\end{equation}
and so the Borel-Cantelli lemma implies that $\norm{\Phi^{(1)}_{N}} \geq t^{(1)}_N$ only finitely many times. Our next step considers
\begin{multline}
\Phi^{(2)}_N = \frac{1}{N} \sum_{n=0}^{N-1}(C\hat{x}_n - y_n)e^{\top}_{n-k'-1} = \\
\frac{1}{N}  \sum_{n=0}^{N-1}(C\bar{\delta}_{n} - z_n - v_n)e^{\top}_{n-k'-1} =\\
\frac{1}{N} \sum_{n=0}^{N-1} (C\delta_n -z_n)e^{\top}_{n-k'-1} + H_n
\end{multline}
where 
\begin{equation}
H_{N} = \frac{1}{N} \sum_{n=0}^{N-1} V_ne^{\top}_{n-k'-1}.
\end{equation}
\begin{figure*}[t]
    \centering
    \includegraphics[trim={1cm 0.3cm 1cm 0},clip,width=0.8\textwidth]{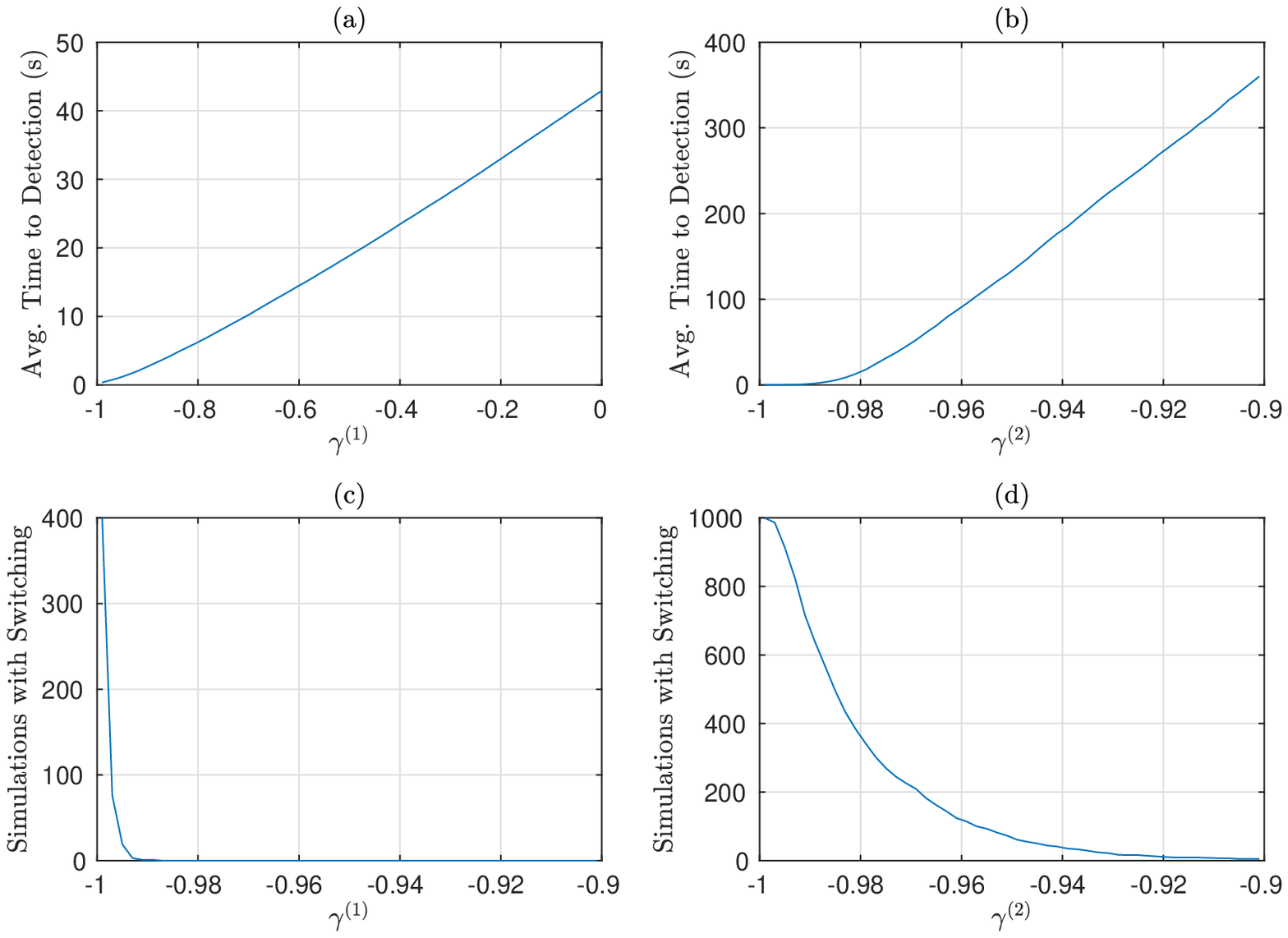}
    \caption{Caption}
    \label{fig:time2detect}
\end{figure*}
Now using Theorem \ref{thm:phi2n}, we have that
\begin{multline}
\mathrm{P}(\norm{\Phi^{(2)}_{N}} \geq t_N) \leq \\
\mathrm{P}\bigg(\|\frac{1}{N} \sum_{n=0}^{N-1} (C\delta_n -z_n)e^{\top}_{n-k'-1}\| \geq t_N - \norm{H_{N}}\bigg) \leq\\
  2m \text{ } e^{\frac{-N^2\left(t_{N} - \norm{H_{N}} \right)^2}{c^{(2)}_{N}}}.
\end{multline}
Next observe that
\begin{equation}
\norm{H_{N}} \leq \frac{K_e}{N}\sum_{k=0}^{N-1} \norm{V_{N}} \leq \frac{K_eG}{N}.
\end{equation}
Combining this with the above implies that
\begin{equation}
\sum_{k=1}^{\infty} \mathrm{P}(\norm{\Phi^{(2)}_{k}} \geq t_k) < \infty,
\end{equation}
and so the Borel-Cantelli lemma implies that $\norm{\Phi^{(2)}_{N}} \geq t_N$ only finitely many times. The remainder of the proof follows similarly to that of the last steps of Theorem \ref{thm:main}.
\end{proof}

Our analysis in this subsection is capable of only providing a simple relation between the power of our detection scheme and the magnitude of $V_{n}$. An analysis that translates to the bounds of each individual $v_{n}$ is more involved because it depends explicitly on the structure/behavior of the matrix $(A+LC)$.

\begin{figure*}[t]
    \centering
    \includegraphics[trim={2cm 0 2cm 0},clip,width=\textwidth]{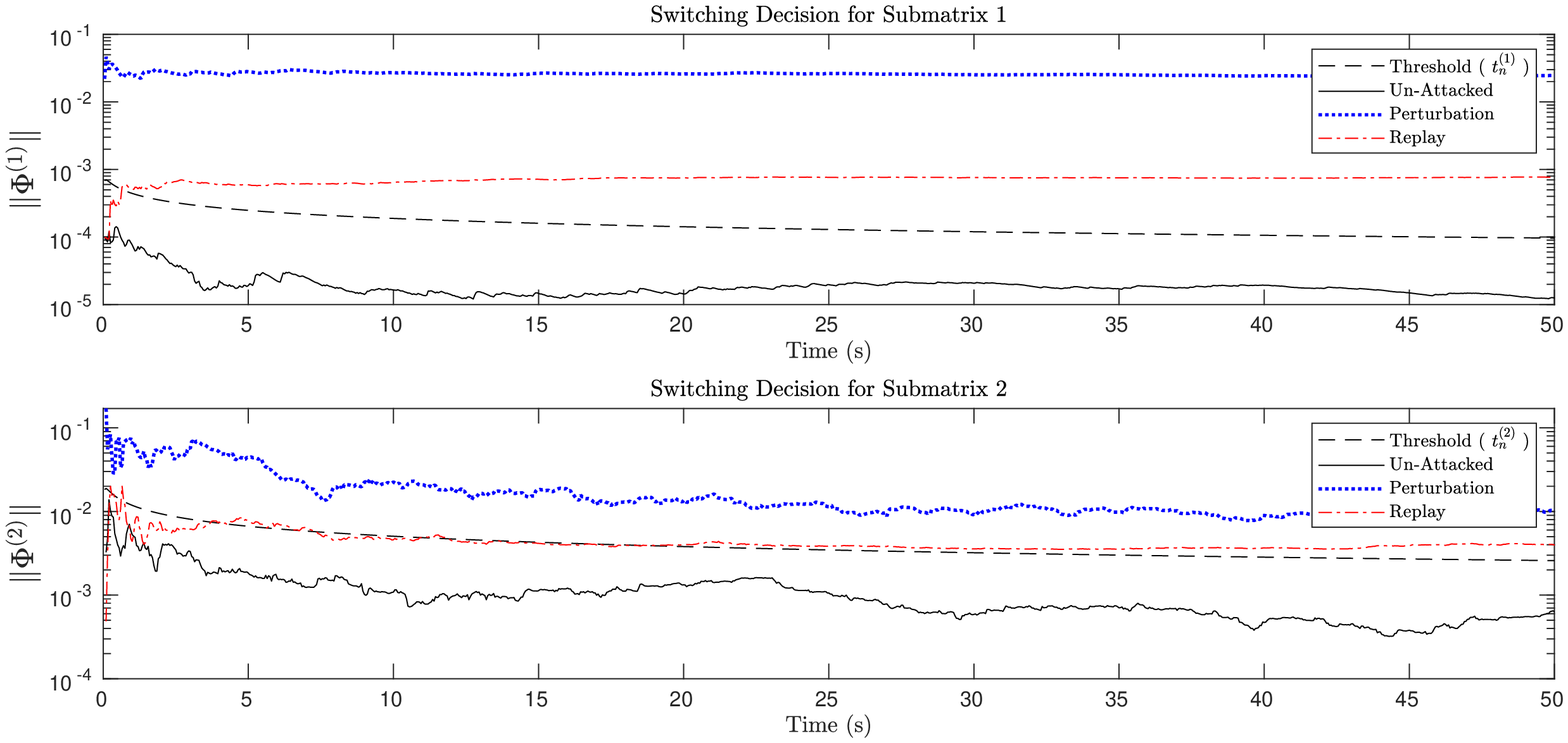}
    \caption{Switching Decision Values Based on the Submatrices $\|\Phi_n^{(1)}\|$ and $\|\Phi_n^{(2)}\|$ in Simulation of Autonomous Vehicle}  
    \label{fig:Test_values}
\end{figure*}

\begin{figure*}[t]
    \centering
    \includegraphics[trim={2cm 0 2cm 0},clip,width=\textwidth]{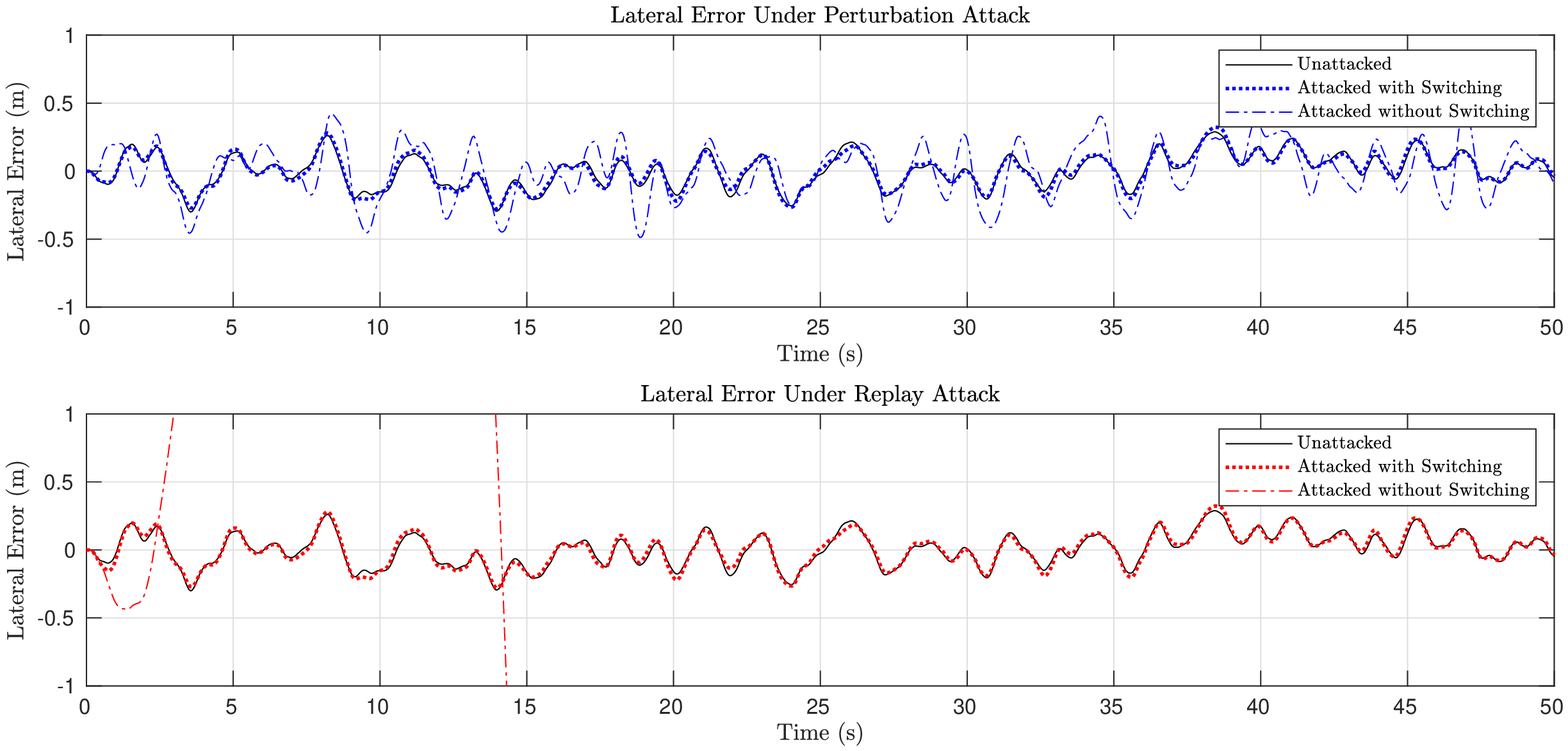}
    \caption{Performance Comparison of Simulated Autonomous Vehicle Lane Keeping Under Attack with and without Switching Policy}
    \label{fig:Performace_compare}
\end{figure*}

\subsection{Replay Attacks}

The second attack we analyze is when
\begin{equation}
v_n = C\xi_n + \zeta_n - (Cx_n + z_n)\label{eq:attack_form_2}
\end{equation}
where $\xi_{n+1} = (A+BK)\xi_n + \omega_n$ and $\omega_n$ is a bounded disturbance. This is a \emph{replay attack} \cite{langner2011stuxnet}, since it subtracts the real sensor measurements and substitutes these with a replay of the dynamics starting from a different initial condition. In fact, we will perform our analysis for a more general attack
\begin{equation}
v_n = C\xi_n + \zeta_n - \gamma\cdot(Cx_n + z_n)\label{eq:attack_form_3},
\end{equation}
where $\gamma\in\mathbb{R}$. This attack also allows for dampening or amplifying the true sensor measurements $(Cx_n + z_n)$.

\begin{theorem}
Consider the closed-loop MIMO LTI system (\ref{LTI-SYS}) with $\alpha_{n},t_{N},E_n$ as defined in Theorem \ref{thm:main}, and suppose the attacker chooses the attack (\ref{eq:attack_form_3}). If the attack is not trivial (i.e., a trivial attack has $v_n \equiv 0$ for all $n\geq 0$), then we have that $\mathrm{P}(\limsup_{n \rightarrow \infty}\neg E_{n} ) = 0$. That is, under the attack with the above specifications the attack is \emph{not} detected only a finite number of times.
\end{theorem}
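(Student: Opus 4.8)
The plan is to show that under the attack (\ref{eq:attack_form_3}) the cross-correlation statistic $\Phi^{(2)}_N$ converges almost surely to a \emph{nonzero} matrix, so that $\|\Phi^{(2)}_N\| > t_N$ for all $N$ past some (random) index and hence $E_N$ holds eventually, forcing $\mathrm{P}(\limsup_N \neg E_N) = 0$. The mechanism: the replayed measurement $C\xi_n + \zeta_n$ is built from the attacker's watermark-free copy $\xi_{n+1} = (A+BK)\xi_n + \omega_n$ and the attacker's fabricated noise $\zeta_n$, both independent of the private excitation $e$, whereas the genuine measurement $Cx_n + z_n$ that the attack subtracts does carry the watermark at lag $k'+1$ with gain $C(A+BK)^{k'}B \ne 0$. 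Subtracting $\gamma$ times the genuine measurement therefore leaves a residual correlation of size $\gamma\, C(A+BK)^{k'}B\,\Sigma_E$ inside $C\hat x_n - y_n$, which the test picks up.

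\textbf{Reduction and the nonzero limit.} Exactly as in the proof of Theorem \ref{thm:fam}, I would write $C\hat x_n - y_n = (C\delta_n - z_n) + V_n$ with $V_n = C\sum_{k=0}^{n-1}\bar L_k v_k - v_n$, so that $\Phi^{(2)}_N = \Phi^{(2),0}_N + H_N$ where $\Phi^{(2),0}_N = \tfrac1N\sum_{n=0}^{N-1}(C\delta_n - z_n)e^\top_{n-k'-1}$ and $H_N = \tfrac1N\sum_{n=0}^{N-1}V_n e^\top_{n-k'-1}$. By Theorem \ref{thm:phi2n} and the Borel--Cantelli argument of Theorem \ref{thm:main}, $\Phi^{(2),0}_N \to 0$ a.s. Next I would substitute $v_k = C\xi_k + \zeta_k - \gamma(Cx_k + z_k)$ into $H_N$ and compute $\mathbb{E}[H_N]$: since $\xi,\omega,\zeta,z,w$ are generated independently of $e$, the only surviving correlation is between the state and $e_{n-k'-1}$; because the watermark enters the closed loop through $\doubleunderline{B}$ and $C(A+BK)^j B = 0$ for $j < k'$, one gets $\mathbb{E}[Cx_k e^\top_{n-k'-1}] = 0$ for every $k \le n-1$ (so the double sum contributes nothing), while $\mathbb{E}[Cx_n e^\top_{n-k'-1}] = C(A+BK)^{k'}B\,\Sigma_E =: M^\star \ne 0$. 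Hence $\mathbb{E}[H_N] \to \gamma M^\star$. To upgrade this to $H_N \to \gamma M^\star$ a.s., I would split $V_n$ into its $e$-independent part (handled by the symmetrization/McDiarmid machinery of Theorems \ref{thm:phi1n}--\ref{thm:phi2n}) and the $\gamma C x_n$ part; the latter produces a term $\tfrac1N\sum C(A+BK)^{k'}B(e_{n-k'-1}e^\top_{n-k'-1} - \Sigma_E)$ controlled as in the bound for $\Phi^{(3)}_N$, together with cross-correlations of distinct excitations and of excitation with noise, all mean zero and uniformly bounded (by Schur stability of $A+BK$ and $A+LC$, boundedness of $w,z,e$ and $\omega$, and the switching constraint of Proposition \ref{prop:swr}), hence concentrating to $0$ after Borel--Cantelli against $t_N = \sqrt{(1+\rho)S\log N/N} \to 0$. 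Thus $\Phi^{(2)}_N \to \gamma M^\star$ a.s.

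\textbf{Conclusion.} When $\gamma \ne 0$ we then have $\|\Phi^{(2)}_N\| \to \|\gamma M^\star\| > 0 > t_N$ for all large $N$, so $E_N$ holds eventually and $\mathrm{P}(\limsup_N \neg E_N) = 0$. For the degenerate residual case $\gamma = 0$, where (\ref{eq:attack_form_3}) reduces to an additive injection $v_n = C\xi_n + \zeta_n$, I would instead use $\Phi^{(1)}_N$: writing $\Phi^{(1)}_N = \Phi^{(1),0}_N + D_N + D_N^\top + M_N$ with $M_N = \tfrac1N\sum V_n V_n^\top \succeq 0$ and $D_N \to 0$ a.s., a nontrivial such attack keeps $\|M_N\|$ bounded away from $0$, so $\|\Phi^{(1)}_N\| > t_N$ eventually and the same conclusion follows.

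\textbf{Main obstacle.} The delicate point is not the Borel--Cantelli bookkeeping but rigorously establishing that $\mathbb{E}[Cx_n e^\top_{n-k'-1}] = C(A+BK)^{k'}B\,\Sigma_E$ and that the fluctuation of $H_N$ vanishes, \emph{in the presence of the closed-loop coupling} between the attack $v_n$, the (possibly switching) state $x_n$, and the excitation $e$ --- i.e., carefully tracking which lagged excitations $x_k$ depends on once the first observer, its switching decisions, and the attack are all in the loop. This is precisely the accounting that the definition of $k'$ and the block-triangular structure of $\doubleunderline{A}(\alpha)$ are designed to make work, but it is where the argument must be handled with care.
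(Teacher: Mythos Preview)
Your approach is essentially the paper's: split on $\gamma \neq 0$ versus $\gamma = 0$, show that $\Phi^{(2)}_N$ (respectively $\Phi^{(1)}_N$) has an almost-sure nonzero limit, and conclude via $\neg E_N \subset \neg E_N^{(j)}$ that $\neg E_N$ occurs only finitely often. The only difference is presentational: the paper obtains the nonzero limits by invoking Theorem~1 of \cite{hespanhol2017dynamic} as a black box, whereas you sketch that computation directly (and correctly flag the closed-loop coupling between $v_n$, $x_n$, and $e$ as the place where care is needed).
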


\begin{proof}
Suppose $\gamma \neq 0$. Then the proof of Theorem 1 in \cite{hespanhol2017dynamic} shows that $\lim_{N\rightarrow\infty} \Phi^{(2)}_N$ exists almost surely and is not equal to 0. This means that $\mathrm{P}(\limsup_{n \rightarrow \infty}\neg E_N^{(2)} ) = 0$. Now consider the case $\gamma = 0$. Then the proof of Theorem 1 in \cite{hespanhol2017dynamic} shows that $\lim_{N\rightarrow\infty} \Phi^{(1)}_N$ exists almost surely and is not equal to 0. This means that $\mathrm{P}(\limsup_{n \rightarrow \infty}\neg E_N^{(1)} ) = 0$. The remainder of the proof by repeating the last steps of Theorem \ref{thm:main} for the two cases, after noting that $\neg E_N = \neg E_N^{(1)} \vee  \neg E_N^{(2)}$ by De Morgan's laws.
\end{proof}
This result is stronger than Theorem \ref{thm:fam} in that it says all replay attacks, and more generally attacks of the form (\ref{eq:attack_form_3}), will \emph{not} be detected by the finite sample tests only a finite number of times. In fact, this result is analagous to the zero-average-power results (\ref{eqn:zasp}) of past work on dynamic watermarking for LTI systems with general structure \cite{hespanhol2017dynamic}, since this result says that only (trivial) replay attacks with zero-average-power cannot be detected.

\section{Experimental Results}
\label{Sec7}
To further demonstrate the effectiveness of this method, we return to the lane keeping example used in \cite{hespanhol2017dynamic} which is based off of the standard model for lane keeping and speed control \cite{turri2013linear}. 
In this model the state vector takes the form $x^T=[\psi~y~s~\gamma~v]$ and input vector $u^T=[r~a]$, where $\psi$ is heading error, $y$ is lateral error, $s$ is trajectory distance, $\gamma$ is vehicle angle, $v$ is vehicle velocity, $r$ is steering, and $a$ is acceleration. 
Linearizing about a straight trajectory at a velocity of 10 m/s and step size of 0.05 seconds gives us an LTI system:
\begin{equation}
A=
\begin{bmatrix}
1 & 0 & 0 & \frac{1}{10} & 0\\
\frac{1}{2} & 1 & 0 & \frac{1}{40} & 0\\
0 & 0 & 1 & 0 & \frac{1}{2}\\
0 & 0 & 0 & 1 & 0\\
0 & 0 & 0 & 0 & 1
\end{bmatrix}
B=
\begin{bmatrix}
\frac{1}{400} & 0\\
\frac{1}{2400} & 0\\
0 & \frac{1}{800}\\
\frac{1}{20} & 0\\
0 & \frac{1}{20}
\end{bmatrix}
\end{equation}
with $C_1=C_2=[I,0]\in \mathbb{R}^{3\times 5}$. The process noise and watermark take the form of uniform random variables such that $w\in[-2.5\times10^{-4},~2.5\times10^{-4}]^5$ and $e\in[-2,~2]^2$. Similarly the measurement noise for each sensor is also estimated as uniform random variables where  $\zeta\in[-1\times10^{-2},~1\times10^{-2}]^3$ and $\eta\in[-2\times10^{-2},~2\times10^{-2}]^3$. For this example we can think of the $\zeta$ measurements as localization using visual or lidar based localization with high definition mapping, and $\eta$ as GPS localization. Finally controller and observer gains $K$ and $L_1=L_2$ were chosen to stabilize the closed loop system.

For this system it was found that
\begin{align}
    c_n^{(1)} &\leq 6.7502\times 10^{-5}n\label{eq:c1_bound}\\
    c_n^{(2)} &\leq 0.0968n.\label{eq:c2_bound}
\end{align}
Using the threshold structure defined in Theorem \ref{thm:main} results in
\begin{align}
    \tau_n^{(1)} &= \sqrt{(1+\rho^{(1)})(6.7502\times 10^{-5})\log(n)/n}\\
    \tau_n^{(2)} &= \sqrt{(1+\rho^{(2)})(0.968)\log(n)/n}.
\end{align}
While the finite switching guarantee given by Theorem \ref{thm:main} only applies for $\rho^{(1)},\rho^{(2)}>0$, due to the conservative nature of the bounds in \eqref{eq:c1_bound}-\eqref{eq:c2_bound} in addition to the desire to also maintain a sufficiently quick detection we instead heuristically tune these values to find the desired balance.


For our analysis of this system, we once again consider the two forms of attack discussed in Section \ref{Sec6}.
The perturbation attack takes the form of random noise pulled from a uniform distribution such that $v_n\in[-0.15 0.15]^3$
The replay attack is described in \eqref{eq:attack_form_2} where $\xi_0=0$ and $\zeta$ and $\omega$ are uniformly distributed such that $\zeta\in[-2.5\times10^{-4},~2.5\times10^{-4}]^3$ and $\omega\in[-2.5\times10^{-4},~2.5\times10^{-4}]^5$. 

Each attacked system, along with an un-attacked system were simulated 1000 times for 10,000 discrete time steps.
While the perturbation attack is detected and switching occurs almost immediately for $\rho^{(1)},\rho^{(2)}<1$, the replay attack can take a much longer time to be detected. 
Figure \ref{fig:time2detect} shows the average time to detection for each of our switching conditions in addition to the number of trials that result in switching for the un-attacked case plotted against the corresponding value of $\rho^{(1)}$ or $\rho^{(2)}$.
While the number of switching simulations for the un-attacked system under switching condition 2 appear to be quite large even when the average time to detect is relatively large, it is important to note that many of the unwanted switches occur in the first four discrete steps which can be mitigated in practice by ignoring the first four values.

Choosing values of $\rho^{(1)}=\rho^{(2)}=-0.98$, each attack was again simulated this time for 1000 discrete time steps both with and without the switching policy. 
Figure \ref{fig:Test_values} shows the value of $\|\Phi_n^{(1)}\|$ and $\|\Phi_n^{(2)}\|$ for both normal operation and under each of the attacks when the switching policy is not being used. The plot shows that for both attack 1 and attack 2 the switching policy will result in an almost immediate and consistent transfer from the attacked sensor to the protected sensor. Furthermore, when the system is un-attacked the values of $\|\Phi_n^{(1)}\|$ and $\|\Phi_n^{(2)}\|$ remain below the switching threshold.
Figure \ref{fig:Performace_compare} compares the performance of the lane keeping algorithm for each attack with respect to the un-attacked performance both with and without the switching policy. This plot shows that for both attacks the switching policy is able to transfer to the protected sensor before significant deviation can occur. This switch allows the vehicles performance to gracefully degrade while avoiding total failure.

\section{Conclusion}
\label{Sec8}

This paper constructed a dynamic watermarking approach for detecting malicious sensor attacks for general LTI systems, and the two main contributions were: to extend dynamic watermarking to general LTI systems under a specific attack model that is more general than replay attacks, and to show that modeling is important for designing watermarking techniques by demonstrating how persistent disturbances can negatively affect the accuracy of dynamic watermarking.  Our approach to resolve this issue was to incorporate a model of the persistent disturbance via the internal model principle.  Future work includes generalizing the attack models that can be detected by our approach.  An additional direction for future work is to study the problem of robust controller design in the regime of when an attack is detected.

\bibliographystyle{IEEEtran}
\bibliography{IEEEabrv,secure}

\begin{thebibliography}{10}
\providecommand{\url}[1]{#1}
\csname url@samestyle\endcsname
\providecommand{\newblock}{\relax}
\providecommand{\bibinfo}[2]{#2}
\providecommand{\BIBentrySTDinterwordspacing}{\spaceskip=0pt\relax}
\providecommand{\BIBentryALTinterwordstretchfactor}{4}
\providecommand{\BIBentryALTinterwordspacing}{\spaceskip=\fontdimen2\font plus
\BIBentryALTinterwordstretchfactor\fontdimen3\font minus
  \fontdimen4\font\relax}
\providecommand{\BIBforeignlanguage}[2]{{%
\expandafter\ifx\csname l@#1\endcsname\relax
\typeout{** WARNING: IEEEtran.bst: No hyphenation pattern has been}%
\typeout{** loaded for the language `#1'. Using the pattern for}%
\typeout{** the default language instead.}%
\else
\language=\csname l@#1\endcsname
\fi
#2}}
\providecommand{\BIBdecl}{\relax}
\BIBdecl

\bibitem{shafi2012cyber}
Q.~Shafi, ``Cyber physical systems security: A brief survey,'' in \emph{2012
  12th International Conference on Computational Science and Its
  Applications}.\hskip 1em plus 0.5em minus 0.4em\relax IEEE, 2012, pp.
  146--150.

\bibitem{abrams2008malicious}
M.~Abrams and J.~Weiss, ``Malicious control system cyber security attack case
  study--{Maroochy} water services, australia,'' \emph{MITRE}, 2008.

\bibitem{langner2011stuxnet}
R.~Langner, ``Stuxnet: Dissecting a cyberwarfare weapon,'' \emph{IEEE Security
  \& Privacy}, vol.~9, no.~3, pp. 49--51, 2011.

\bibitem{cardenas2008research}
A.~A. C{\'a}rdenas, S.~Amin, and S.~Sastry, ``Research challenges for the
  security of control systems.'' in \emph{HotSec}, 2008.

\bibitem{parno2006secure}
B.~Parno, M.~Luk, E.~Gaustad, and A.~Perrig, ``Secure sensor network routing: A
  clean-slate approach,'' in \emph{ACM CoNEXT}, 2006.

\bibitem{kumar2006managing}
V.~Kumar, J.~Srivastava, and A.~Lazarevic, \emph{Managing cyber threats:
  issues, approaches, and challenges}.\hskip 1em plus 0.5em minus 0.4em\relax
  Springer, 2006, vol.~5.

\bibitem{wang2013cyber}
W.~Wang and Z.~Lu, ``Cyber security in the smart grid: Survey and challenges,''
  \emph{Computer Networks}, vol.~57, no.~5, pp. 1344--1371, 2013.

\bibitem{kim2012cyber}
K.-D. Kim and P.~R. Kumar, ``Cyber--physical systems: A perspective at the
  centennial,'' \emph{Proc. of IEEE}, vol. 100, pp. 1287--1308, 2012.

\bibitem{amin2009safe}
S.~Amin, A.~A. C{\'a}rdenas, and S.~S. Sastry, ``Safe and secure networked
  control systems under denial-of-service attacks,'' in \emph{International
  Workshop on Hybrid Systems: Computation and Control}.\hskip 1em plus 0.5em
  minus 0.4em\relax Springer, 2009, pp. 31--45.

\bibitem{cardenas2008secure}
A.~A. Cardenas, S.~Amin, and S.~Sastry, ``Secure control: Towards survivable
  cyber-physical systems,'' in \emph{ICDCS}, 2008, pp. 495--500.

\bibitem{pasqualetti2013attack}
F.~Pasqualetti, F.~D{\"o}rfler, and F.~Bullo, ``Attack detection and
  identification in cyber-physical systems,'' \emph{IEEE Transactions on
  Automatic Control}, vol.~58, no.~11, pp. 2715--2729, 2013.

\bibitem{gomez2004power}
A.~Gomez-Exposito and A.~Abur, \emph{Power system state estimation: theory and
  implementation}.\hskip 1em plus 0.5em minus 0.4em\relax CRC press, 2004.

\bibitem{bai2015security}
C.-Z. Bai, F.~Pasqualetti, and V.~Gupta, ``Security in stochastic control
  systems: Fundamental limitations and performance bounds,'' in \emph{American
  Control Conference (ACC), 2015}.\hskip 1em plus 0.5em minus 0.4em\relax IEEE,
  2015, pp. 195--200.

\bibitem{fawzi2014secure}
H.~Fawzi, P.~Tabuada, and S.~Diggavi, ``Secure estimation and control for
  cyber-physical systems under adversarial attacks,'' \emph{IEEE Transactions
  on Automatic Control}, vol.~59, no.~6, pp. 1454--1467, 2014.

\bibitem{fawzi2011secure}
------, ``Secure state-estimation for dynamical systems under active
  adversaries,'' in \emph{Allerton Conference}.\hskip 1em plus 0.5em minus
  0.4em\relax IEEE, 2011, pp. 337--344.

\bibitem{weerakkody2014detecting}
S.~Weerakkody, Y.~Mo, and B.~Sinopoli, ``Detecting integrity attacks on control
  systems using robust physical watermarking,'' in \emph{Proc. of IEEE CDC},
  2014, pp. 3757--3764.

\bibitem{mo2009secure}
Y.~Mo and B.~Sinopoli, ``Secure control against replay attacks,'' in
  \emph{Allerton Conference}.\hskip 1em plus 0.5em minus 0.4em\relax IEEE,
  2009, pp. 911--918.

\bibitem{mo2014detecting}
Y.~Mo, R.~Chabukswar, and B.~Sinopoli, ``Detecting integrity attacks on scada
  systems,'' \emph{IEEE CST}, vol.~22, no.~4, pp. 1396--1407, 2014.

\bibitem{gallo2018distributed}
A.~J. Gallo, M.~S. Turan, F.~Boem, G.~Ferrari-Trecate, and T.~Parisini,
  ``Distributed watermarking for secure control of microgrids under replay
  attacks,'' \emph{IFAC-PapersOnLine}, vol.~51, no.~23, pp. 182--187, 2018.

\bibitem{mo2010false}
Y.~Mo, E.~Garone, A.~Casavola, and B.~Sinopoli, ``False data injection attacks
  against state estimation in wireless sensor networks,'' in \emph{Proc. of
  IEEE CDC}, 2010, pp. 5967--5972.

\bibitem{mo2015physical}
Y.~Mo, S.~Weerakkody, and B.~Sinopoli, ``Physical authentication of control
  systems: Designing watermarked control inputs to detect counterfeit sensor
  outputs,'' \emph{IEEE Control Systems}, vol.~35, no.~1, pp. 93--109, 2015.

\bibitem{satchidanandan2016dynamic}
B.~Satchidanandan and P.~Kumar, ``Dynamic watermarking: Active defense of
  networked cyber-physical systems,'' \emph{Proc. of IEEE}, 2016.

\bibitem{ko2016theory}
W.-H. Ko, B.~Satchidanandan, and P.~Kumar, ``Theory and implementation of
  dynamic watermarking for cybersecurity of advanced transportation systems,''
  in \emph{Proc. of IEEE CNS}, 2016, pp. 416--420.

\bibitem{satchidanandan2017minimal}
B.~Satchidanandan and P.~Kumar, ``On minimal tests of sensor veracity for
  dynamic watermarking-based defense of cyber-physical systems,'' in
  \emph{Communication Systems and Networks (COMSNETS), 2017 9th International
  Conference on}.\hskip 1em plus 0.5em minus 0.4em\relax IEEE, 2017, pp.
  23--30.

\bibitem{hespanhol2017dynamic}
P.~Hespanhol, M.~Porter, R.~Vasudevan, and A.~Aswani, ``Dynamic watermarking
  for general lti systems,'' in \emph{IEEE Conference on Decision and Control
  (CDC)}, 2017, pp. 1834--1839.

\bibitem{gonzalez2010perpetual}
H.~Gonzalez and E.~Polak, ``On the perpetual collision-free rhc of fleets of
  vehicles,'' \emph{Journal of optimization theory and applications}, vol. 145,
  no.~1, pp. 76--92, 2010.

\bibitem{aswani2011}
A.~Aswani and C.~Tomlin, ``Game-theoretic routing of gps-assisted vehicles for
  energy efficiency,'' in \emph{ACC}, 2011, pp. 3375--3380.

\bibitem{zhang2012hierarchical}
W.~Zhang, M.~Kamgarpour, D.~Sun, and C.~J. Tomlin, ``A hierarchical flight
  planning framework for air traffic management,'' \emph{Proceedings of the
  IEEE}, vol. 100, no.~1, pp. 179--194, 2012.

\bibitem{vasudevan2012safe}
R.~Vasudevan, V.~Shia, Y.~Gao, R.~Cervera-Navarro, R.~Bajcsy, and F.~Borrelli,
  ``Safe semi-autonomous control with enhanced driver modeling,'' in
  \emph{ACC}, 2012, pp. 2896--2903.

\bibitem{mohan2016convex}
S.~Mohan and R.~Vasudevan, ``Convex computation of the reachable set for hybrid
  systems with parametric uncertainty,'' in \emph{Proc. of ACC}, 2016, pp.
  5141--5147.

\bibitem{como2016convexity}
G.~Como, E.~Lovisari, and K.~Savla, ``Convexity and robustness of dynamic
  network traffic assignment for control of freeway networks,''
  \emph{IFAC-PapersOnLine}, vol.~49, no.~3, pp. 335--340, 2016.

\bibitem{hespanhol2017statistical}
P.~Hespanhol, M.~Porter, R.~Vasudevan, and A.~Aswani, ``Statistical
  watermarking for networked control systems,'' in \emph{American Control
  Conference (ACC)}, 2018, pp. 5467--5472.

\bibitem{mitra2016secure}
A.~Mitra and S.~Sundaram, ``Secure distributed observers for a class of linear
  time invariant systems in the presence of byzantine adversaries,'' in
  \emph{Decision and Control (CDC), 2016 IEEE 55th Conference on}.\hskip 1em
  plus 0.5em minus 0.4em\relax IEEE, 2016, pp. 2709--2714.

\bibitem{bernat2015multi}
J.~Bernat and S.~Stepien, ``Multi-modelling as new estimation schema for
  high-gain observers,'' \emph{International Journal of Control}, vol.~88,
  no.~6, pp. 1209--1222, 2015.

\bibitem{mitra2018distributed}
A.~Mitra and S.~Sundaram, ``Distributed observers for lti systems,'' \emph{IEEE
  Transactions on Automatic Control}, 2018.

\bibitem{stein1972bound}
C.~Stein \emph{et~al.}, ``A bound for the error in the normal approximation to
  the distribution of a sum of dependent random variables,'' in
  \emph{Proceedings of the Sixth Berkeley Symposium on Mathematical Statistics
  and Probability, Volume 2: Probability Theory}.\hskip 1em plus 0.5em minus
  0.4em\relax The Regents of the University of California, 1972.

\bibitem{mackey2014matrix}
L.~Mackey, M.~I. Jordan, R.~Y. Chen, B.~Farrell, J.~A. Tropp \emph{et~al.},
  ``Matrix concentration inequalities via the method of exchangeable pairs,''
  \emph{The Annals of Probability}, vol.~42, no.~3, pp. 906--945, 2014.

\bibitem{bhatia2013matrix}
R.~Bhatia, \emph{Matrix analysis}.\hskip 1em plus 0.5em minus 0.4em\relax
  Springer Science \& Business Media, 2013, vol. 169.

\bibitem{turri2013linear}
V.~Turri, A.~Carvalho, H.~Tseng, K.~Johansson, and F.~Borrelli, ``Linear model
  predictive control for lane keeping and obstacle avoidance on low curvature
  roads,'' in \emph{Proc. of IEEE ITSC}, 2013, pp. 378--383.

\end{thebibliography}

%
%
%
%




\end{document}